\numberwithin{equation}{subsection} %%% ajoute une numérotation des équations avec les sections %%%
\newtheorem{thm}{Theorem}[section]
\newtheorem{prop}[thm]{Proposition}
\newtheorem{lem}[thm]{Lemma}
\newtheorem*{thmsans}{Theorem}%%% thm sans 
\newtheorem{cor}[thm]{Corollary}
\newtheoremstyle{bidule}% name
{3pt}% Space above
{3pt}% Space below
{}% Body font
{}% Indent amount
{\scshape}% Theorem head font
{.}% Punctuation after theorem head
{.5em}% Space after theorem head
{}% Theorem head spec (can be left empty, meaning `normal')
\newtheorem{df}[thm]{Definition}
\theoremstyle{definition}
\newtheorem{rmk}[thm]{Remark}
\newtheorem{ex}[thm]{Example}
\newtheorem*{quest}{Question} % % % Question
\newtheorem*{note}{Note}
\newtheorem*{warn}{Warning}
\newtheorem*{claim}{Claim}
\newtheorem{nota}[thm]{Notation}
\newcommand{\C}{\mathcal{C}}
\newcommand{\Ub}{\mathcal{U}}%%% Ub comme Oublie%%% forgetful functor%%%%%
\newcommand{\F}{\mathcal{F}}
\newcommand{\Ar}{\text{Arr}}
\newcommand{\Ba}{\mathcal{B}}
\newcommand{\Aa}{\mathcal{A}}
\newcommand{\M}{\mathscr{M}}
\newcommand{\V}{\mathbb{V}}
\newcommand{\Ja}{\mathbf{J}} %%acyclic cofibrations%%%
\newcommand{\J}{\mathcal{J}} %% index category for limit and colimits%%%
\newcommand{\W}{\mathscr{W}}
\newcommand{\Fa}{\mathcal{F}}
\newcommand{\I}{\mathbf{I}}
\newcommand{\G}{\mathcal{G}}
\renewcommand{\S}{\mathcal{S}}
\newcommand{\Fb}{\mathbf{F}}%%%%%% Free functor 
\renewcommand{\to}{\longrightarrow}
\newcommand{\ol}{\overline}
\newcommand{\ul}{\underline}
\newcommand{\U}{\mathbb{U}}
\newcommand{\Ob}{\text{Ob}}% set of objects  
\newcommand{\tx}{\text}
\renewcommand{\to}{\longrightarrow}
\DeclareMathOperator\Id{Id}
\DeclareMathOperator\Hom{Hom}
\DeclareMathOperator\Set{\textbf{Set}} % Category of Set
\DeclareMathOperator\Lax{Lax}%% Lax functors 
\DeclareMathOperator\Cat{\mathbf{Cat}}%% Cat
\DeclareMathOperator\colim{colim}%% colim
\DeclareMathOperator\msx{\M_{\S}(\tx{$X$})}%% Pre-Coseg-Cat
\DeclareMathOperator\msxex{\M_{\S}(\tx{$X$})_{\tx{ex}}}%% Pre-Coseg-Cat
\DeclareMathOperator\mcatx{\M\tx{-}\Cat(\tx{$X$})}%% 
\DeclareMathOperator\msxproj{\M_{\S}(\tx{$X$})_{\tx{proj}}} % % % msx with the projective model structure.
\DeclareMathOperator\sx{\S_{\ol{X}}} % % % %  sx
\DeclareMathOperator\Ho{\mathbf{ho}} % % % %  homotopy category
\DeclareMathOperator\sxop{(\S_{\ol{X}})^{2\tx{-op}}} % % % %  all  morphisms  % \cite{DHKS}
\DeclareMathOperator\Depiop{\Delta_{\tx{epi}}^{op}}
\DeclareMathOperator\kc{\Pi_{\C}(\M)} 
\DeclareMathOperator\Wa{\textbf{W}_{ex}}
\DeclareMathOperator\lr{\mathbf{lr}} % 
\DeclareMathOperator\cb{\mathbf{c}}  
\DeclareMathOperator\laxlatch{\mathbf{Latch}_{lax}}  %% laxlatch 
\DeclareMathOperator\laxmatch{\mathbf{Match}_{lax}}  %% laxlatch 
\DeclareMathOperator\match{\mathbf{Match}}  %%
\DeclareMathOperator\latch{\mathbf{Latch}}  %% laxlatch  
\DeclareMathOperator\Depi{\Delta_{\tx{epi}}} %%
\DeclareMathOperator\sset{\mathbf{sSet}} %% simplicial sets
\DeclareMathOperator\disc{\mathbf{disc}} %%
\DeclareMathOperator\indisc{\mathbf{indisc}} %%
\DeclareMathOperator\scolax{\mathbf{SColax}}
\DeclareMathOperator\phepi{\Phi_{\tx{epi}}}
\DeclareMathOperator\phepiop{\Phi^{op}_{\tx{epi}}}
\title{Toward a stritification theorem for co-Segal categories}
\author{Hugo V. Bacard 
%\thanks{This research is supported in part by the Agence Nationale de la Recherche
%grant ANR-09-BLAN-0151-02 (HODAG)
 %}
}
 \affil{Western University}
\date{}
\begin{document}
\maketitle

\begin{abstract}
We show that for a monoidal model category $\M=(\ul{M}, \otimes, I)$, certain co-Segal  $\M$-categories are equivalent to strict ones. 
\end{abstract}
\setcounter{tocdepth}{1}
\tableofcontents 

\section{Introduction}
We've started a theory of homotopy enrichment with the notion of co-Segal category (see \cite{COSEC1}). The basic idea is to replace the composition operation `$\C(A,B) \otimes \C(B,C) \to \C(A,C)$' by  configurations of the form:
\[
\xy
(-15,0)*+{\C(A,B) \otimes \C(B,C)}="X";
(30,0)*+{\C(A,B,C)}="Y";
(30,18)*+{\C(A,C)}="E";
{\ar@{->}^-{\varphi}"X";"Y"};
{\ar@{->}^-{}_{\wr}"E";"Y"};
{\ar@{.>}^-{}"X";"E"};
\endxy
\]
where the vertical map is a weak equivalence.\\

Such structure is defined as a lax functor $\C: \sxop \to \M$ satisfying a homotopy condition (vertical maps being weak equivalence). Here $\sxop$ is a strict $2$-category build out of a set $X$ and $\M$ is a symmetric monoidal model category (viewed as a $2$-category with a single object). The set $X$ is the set of objects of  $\C$ .\\

The philosophy of co-Segal categories is to reserve the Segal situation, but this is not the only difference. Indeed, Segal categories are defined by simplicial diagrams satisfying some homotopy conditions (Segal conditions) whereas the definition of co-Segal categories mixes both simplicial structure, homotopy conditions together with  algebraic data (the map $\varphi$ above and his cousins). And it seems that the the presence of algebraic data creates some \emph{obstruction} to have a nice homotopical understanding of these structures. 

For example, as far as the author knows, we cannot guarantee that the category of $\M$-valued lax functors inherits the left properness of $\M$. \\
Strict $\M$-categories correspond to co-Segal categories that are \emph{purely algebraic}, in the sense that the simplicial structure and homotopy conditions are trivial: everything is given by identity morphisms. \\

In this paper we investigate the \emph{strictification problem} for co-Segal $\M$-categories for a (symmetric) monoidal model category $\M$. So morally we try to find an analogue of Bergner's strictification theorem for Segal categories (see \cite{Bergner_rigid}).
We have the following theorem 

\begin{thmsans}[\ref{quasi-strict}]
Every excellent co-Segal $\M$-category is equivalent to an $\M$-category with the same objects and having a strict composition and weak identity morphisms.
\end{thmsans}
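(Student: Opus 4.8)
The plan is to strictify the lax composition by passing to a colimit over chains, trading the map $\varphi$ for an honest associative multiplication while the excellence hypothesis keeps the homotopy type under control. First I would fix objects $A,B\in X$ and restrict the lax functor $\C$ to the diagram of chains from $A$ to $B$, that is the finite sequences $A=A_0,A_1,\dots,A_n=B$ with the structure maps coming from $\Depiop$ (equivalently the relevant slice of $\phepiop$). I would then set
\[
\D(A,B):=\colim\,\C(A_0,\dots,A_n),
\]
the colimit being taken over this category of chains ordered by refinement, so that the trivial chain $(A,B)$ is its initial object. Since $\M$ is monoidal closed, $-\otimes-$ preserves colimits in each variable; concatenation of a chain $A\to B$ with a chain $B\to C$ is associative and functorial, so composing the canonical isomorphism $\D(A,B)\otimes\D(B,C)\cong\colim\big(\C(\dots)\otimes\C(\dots)\big)$ with the lax maps $\varphi$ yields a composition $\D(A,B)\otimes\D(B,C)\to\D(A,C)$. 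This composition is \emph{strictly} associative: associativity is inherited from that of concatenation of chains together with the coherence axioms of the lax functor $\C$.

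\textbf{Homotopy type.} Next I would show $\D(A,B)$ is weakly equivalent to $\C(A,B)$. By the co-Segal condition every structure map of the chain diagram is a weak equivalence, so the diagram is homotopy constant; as the indexing category has an initial object its nerve is contractible, and the homotopy colimit of a homotopy-constant diagram over a category with contractible nerve is weakly equivalent to any of its values, here $\C(A,B)$. The role of the excellence hypothesis is precisely to make the chain diagram projectively cofibrant, so that the point-set colimit defining $\D(A,B)$ computes the homotopy colimit; combining the two facts, the leg $\C(A,B)\to\D(A,B)$ of the colimit cocone is a weak equivalence. Assembling these legs over all pairs produces a map $\C\to\D$ (a strict $\M$-category being regarded as a co-Segal $\M$-category, as recalled in the introduction) which is a levelwise weak equivalence, hence the desired equivalence.

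\textbf{Units and the main obstacle.} Finally I would treat the identities. The unit $I\to\D(A,A)$ is induced by the degeneracy at $A$, but concatenating a chain with the trivial chain returns the original chain only up to a refinement map, which is a weak equivalence rather than an identity; hence the unit axioms hold only after passing through the equivalences $\C(A,B)\to\D(A,B)$. This is exactly why $\D$ carries a strict composition yet merely \emph{weak} identity morphisms, as the statement asserts. I expect the main obstacle to be the homotopy-type step: proving that the strict colimit genuinely models the homotopy colimit and that the interchange of $\otimes$ with $\colim$ is homotopically meaningful, so that tensoring the cofibrant chain diagrams and passing to colimits preserves weak equivalences. Verifying this compatibility — rather than the formal associativity, which is automatic — is the technical heart of the argument, and it is also the place where the obstruction to rectifying the units to strict ones becomes visible.
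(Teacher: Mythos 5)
Your proposal is correct and is essentially the paper's own proof: your $\D(A,B)$ is exactly the paper's $|\F|(A,B)=\colim \F_{AB}$ from Lemma \ref{lax-to-cat}, with strict associativity obtained the same way (distributing $\otimes$ over colimits using monoidal closedness), and your homotopy-type step is the paper's Proposition \ref{prop_weak_equiv} combined with excellence. Two small points of difference: excellence only provides a weak equivalence to a $\Ub$-cofibrant diagram rather than making the chain diagrams of $\C$ itself cofibrant, so one must first pass to that replacement (as the paper does); and the paper justifies the colimit comparison not via nerve contractibility of the chain category but by the more elementary argument of comparing $\F$ with the constant diagram at the value of the initial chain, both Reedy cofibrant, so that the colimit functor carries this pointwise weak equivalence to a weak equivalence.
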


As one can observe this is not totally a strictification theorem since it concerns only  the ones we've called \emph{excellent} (Definition \ref{excellent-lax-diag}). We don't know if the theorem holds for all co-Segal categories. 

Even if we don't know examples of non-excellent co-Segal categories, there are some reasons from the theory of \emph{triangulated categories} that suggest that not all co-Segal categories admit a strict model. In fact it was acknowledged to the author that F.Muro has examples of triangulated categories  that don't have \emph{dg-enhancement} (see also \cite{MSS}). But it is more likely that such triangulated category  be can be enhanced by a co-Segal (dg)-category which, a posteriori, shouldn't be equivalent to a strict one.\\

Our theorem goes in the direction of \emph{Simpson's conjecture}  which says that ``higher categories are equivalent to ones that admit a strictly associative composition but weak identities'' (see \cite{Simpson_weak_unit}). A particular case of the conjecture has been proved by Joyal and Kock (see \cite{Joyal_Kock_wu}).

To prove the theorem we simply use the fact a weak equivalence between cofibrant Reedy diagrams induces an equivalence on the colimits. And being \emph{excellent} ensures that we are in this case up-to a weak equivalence.\\

We give a weaker version of the previous result in Theorem \ref{weak-strict}. Unfortunately even this weaker version doesn't not induce a Quillen equivalence between arbitrary co-Segal categories and strict categories. 

Finally we would like to remind the reader that co-Segal categories that are considered here have homotopy units. And that the previous theorem gives a strictification for the composition and not the units. We will address the strictification of homotopy units in a different work. 

\section{Excellent lax diagrams}
\subsection{Preliminaries}
\begin{warn}
In this paper all the set theoretical size issues have been left aside \footnote{We can work with universes $\U \subsetneq \V \subsetneq  \cdots$ }. Some of the material provided here are well known facts and we make no claim of inventing or introducing them. Unless otherwise specified when we say `lax functor' we will mean the ones called \emph{normal lax functors} or \emph{normalized lax functor}. These are lax functors $\Fa$ such that the maps `$\Id \to \Fa(\Id)$' are identities and all the laxity maps $\Fa(\Id) \otimes \Fa(f) \to \Fa( \Id \otimes f)$ are natural isomorphisms.
\end{warn}
In the following $\C$ is a locally Reedy $2$-category (henceforth $\lr$-category) which is simple in the sense of \cite{COSEC1}. This means that each hom-category $\C(A,B)$ has a Reedy structure together with a degree that is compatible with compositions. Consider $\overleftarrow{\C}$ the $2$-category obtained by keeping only  the \ul{inverse} category of each $\C(A,B)$.

\begin{df}
Say that $\C$ is an \textbf{inverse divisible} locally Reedy $2$-category if every composition functor:
$$ 
\overleftarrow{\C}(A,B) \times \overleftarrow{\C}(B,C) \to \overleftarrow{\C}(A,C)
$$
is a Grothendieck fibration.
\end{df}

For a monoidal category $\M=(\ul{M}, \otimes, I)$, we will denote by $\Lax(\C,\M)_n$ the category of normal lax functors and icons; and  by $\kc=\prod_{A,B}\Hom[\C(A,B), \ul{M}]$. \\

We have a forgetful functor: $\Ub: \Lax(\C,\M)_n \to \kc$ that admits a left adjoint if $\M$ is cocomplete (see \cite{COSEC1})\footnote{This hold for arbitrary $2$-categories $\C$ and not only for $\lr$ ones}.\\  

Let  $\F: \C \to \M$ be lax diagram in a complete monoidal category.  Given a $1$-morphism $z\in \C(A,B)$, one has the corresponding notions of:
\begin{enumerate}
\item lax-latching object of $\F$ at $z$: $\laxlatch(\F,z)$;
\item lax-matching  object $\laxmatch(\F,z)=\match(\F_{AB},z)$; 
\item and the classical latching object $\latch(\F_{AB},z)$.
\end{enumerate} 

\begin{rmk}\label{rmk-reedy-map}
We have canonical maps:
\begin{align*}
\laxlatch(\F,z) \to \F(z),\\
\laxmatch(\F,z) \to \F(z),\\
\latch(\F_{AB},z) \to \F(z).
\end{align*}
and one important map:
\begin{align*}
\delta_z:\latch(\F_{AB},z) \to \laxlatch(\F,z).
\end{align*}
We have a factorization of the map $\latch(\F_{AB},z) \to \F(z)$ as:
\begin{align*}
\latch(\F_{AB},z) \xrightarrow{\delta_z} \laxlatch(\F,z) \to  \F(z).
\end{align*} 
\end{rmk}

And if $\M$ is a monoidal model category then we can define corresponding notion of Reedy cofibrations and Reedy fibrations. Denote by $\kc_{\tx{-Reedy}}$  the product Reedy model structure on $\kc=\prod_{A,B}\Hom[\C(A,B), \ul{M}]$. Similarly we will denote by $\kc_{\tx{-proj}}$ the product projective model structure.  \\

 The advantage of having such $\lr$-category is that we can use `Reedy techniques' and establish the following:

\begin{thm}
Let $\M$ be a monoidal model category  and $\C$  be an $\lr$-category which is inverse co-divisible. Then we have:
\begin{enumerate}
\item there exists a unique model structure, called the Reedy model structure, on the category $\Lax(\C,\M)_n$ of normal lax functors such that 
$$ \Ub : \Lax(\C,\M)_n \to \kc_{\tx{-Reedy}} $$
is a right Quillen functor;
\item  if $\C$ is totally direct i.e, $\C=\overrightarrow{\C}$, then we have a `projective' Quillen:
$$ \Ub : \Lax(\C,\M)_n \to \kc_{\tx{-proj}}$$ 
\item if all objects of $\M$ are cofibrant and $\M$ is cofibrantly generated, then for \ul{any} $\C$ we also have a projective Quillen adjunction: 
$$ \Ub : \Lax(\C,\M)_n \to \kc_{\tx{-proj}}.$$ 
between cofibrantly generated model categories.
\end{enumerate}
\end{thm}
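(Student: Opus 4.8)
The plan is to treat the three parts in increasing order of difficulty, building everything on the adjunction $\Fb \dashv \Ub$ with $\Fb : \kc \to \Lax(\C,\M)_n$ the free functor, and declaring at the outset that a map $f$ in $\Lax(\C,\M)_n$ is a \emph{weak equivalence} precisely when $\Ub(f)$ is a levelwise weak equivalence in $\kc$. For part (1) I would not attempt a Kan-style transfer (which would force cofibrant generation) but instead construct the Reedy model structure \emph{by hand}, by induction on the Reedy degree supplied by the $\lr$-structure of $\C$. Fibrations are defined by asking $\Ub(f)$ to be a Reedy fibration in $\kc_{\tx{-Reedy}}$, i.e. that each relative matching map built from $\laxmatch(\F,z)=\match(\F_{AB},z)$ be a fibration in $\M$; since the matching data of a normal lax functor lives entirely inside the hom-categories $\C(A,B)$, this condition is genuinely levelwise and $\Ub$ preserves it tautologically. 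The content is therefore in the cofibrations and in the two factorizations.

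The key is to characterize a (trivial) cofibration $f:\F\to\Ga$ as a map for which, at every $1$-morphism $z$, the relative \emph{lax}-latching map
\[
\laxlatch(\Ga,z)\cup_{\laxlatch(\F,z)}\F(z)\longrightarrow \Ga(z)
\]
is a (trivial) cofibration in $\M$, and then to run the usual Reedy induction: assuming a functorial factorization has been produced on the full sub-$\lr$-category of degree $<n$, extend it to degree $n$ by factoring the map out of the relevant pushout and using the factorization $\latch(\F_{AB},z)\xrightarrow{\delta_z}\laxlatch(\F,z)\to\F(z)$ of Remark \ref{rmk-reedy-map} to reconcile the classical latching data with the laxity maps. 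This is exactly the step where \emph{inverse co-divisibility} is used: the Grothendieck (co)fibration condition on the composition functors guarantees that the category indexing the colimit defining $\laxlatch(\F,z)$ is fibered in the appropriate way, so that the lax-latching object is computed as an honest iterated pushout of tensors and the free functor $\Fb$ is compatible with the skeletal filtration; without it the colimit defining $\laxlatch$ need not interact well with pushouts and the induction breaks. Once the factorizations and the two lifting axioms are in place (retracts and $2$-out-of-$3$ being inherited levelwise from $\M$), $\Ub$ preserves fibrations and trivial fibrations by construction, hence is right Quillen; uniqueness is immediate because we have fixed both the weak equivalences and the fibrations, and these determine the model structure.

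For part (2), observe that if $\C=\overrightarrow{\C}$ is totally direct then each $\C(A,B)$ is a direct Reedy category, so every matching object is trivial. Consequently the Reedy fibrations of $\kc_{\tx{-Reedy}}$ are exactly the levelwise fibrations, which are the projective fibrations, while the weak equivalences already agree; hence $\kc_{\tx{-Reedy}}=\kc_{\tx{-proj}}$ as model categories and part (2) is a formal consequence of part (1). I emphasize that no cofibrant generation is needed here, consistent with the statement.

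Part (3) is the only place where I would invoke a genuine transfer. Assuming all objects of $\M$ cofibrant and $\M$ cofibrantly generated, $\kc_{\tx{-proj}}$ is itself cofibrantly generated, with generating (trivial) cofibrations the levelwise free maps on sets of generators $\I$ (resp. $\Ja$). I would right-transfer this structure along $\Fb\dashv\Ub$: declare $f$ a fibration or weak equivalence iff $\Ub(f)$ is one projectively, take $\Fb(\I)$ and $\Fb(\Ja)$ as generators, and verify the hypotheses of Kan's transfer lemma. Completeness and cocompleteness of $\Lax(\C,\M)_n$ and the smallness needed for the small object argument are routine; the whole difficulty --- and the main obstacle of the theorem --- is the \emph{acyclicity condition}, namely that every relative $\Fb(\Ja)$-cell complex is a levelwise weak equivalence. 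I would prove this by filtering a typical pushout $\F\to\F\cup_{\Fb(A)}\Fb(B)$ along a generating trivial cofibration $A\to B$ and identifying the associated graded with coproducts of tensor products of the values $\F(z)$ with the cells; here the hypothesis that \emph{all objects of $\M$ are cofibrant} is exactly what makes each such tensor factor preserve weak equivalences (via a Ken Brown / pushout-product argument), so that the transfinite composite stays a levelwise weak equivalence. With acyclicity established, Kan's lemma yields the cofibrantly generated model structure and makes $\Ub$ right Quillen, completing part (3).
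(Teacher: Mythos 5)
Your proposal is correct and follows essentially the same route as the paper: the paper handles part (1) by citing the dual of the Reedy-type theorem for colax diagrams (whose proof is exactly the latching/matching induction over the $\lr$-structure that you sketch, with divisibility playing the role you assign it), deduces part (2) from part (1) together with the observation that the projective and Reedy structures coincide over direct categories, and obtains part (3) by the Schwede--Shipley transfer lemma along $\Ub$, which is the same transfer argument (and the same use of the all-objects-cofibrant hypothesis in the acyclicity step) that you describe. The only difference is that the paper's proof is essentially a proof by citation, whereas you reconstruct in outline the content of the results it cites.
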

 
\begin{proof}
Assertion $(1)$ is the dual statement of 
\cite[Theorem 7.1 ]{Colax_Reedy}. Assertion $(2)$ is a corollary of Assertion $(1)$ combine with the fact that for direct Reedy categories, the projective and Reedy model structures are the same. 
Assertion $(3)$ can be found in a more general context in \cite{COSEC1}.
\end{proof}

\begin{rmk}
To prove Assertion $(3)$ one uses  a transfer lemma of Schwede-Shipley \cite{Sch-Sh-Algebra-module} as exposed in \cite{COSEC1}.
\end{rmk}
\subsection{Excellent lax diagrams}
From now on we will work with the Reedy model structure.
\begin{df}\label{excellent-lax-diag}
A lax diagram $\F \in \Lax(\C,\M)_n$ is $\Ub$-cofibrant if $\Ub(\F)$ is cofibrant in $\prod_{A,B}\Hom[\C(A,B), \ul{M}]$. 
A lax diagram $\F$ is \textbf{excellent} if there is a weak equivalence $\F \xrightarrow{\sim} \G$ where $\G$ is a $\Ub$-cofibrant lax diagram.
\end{df}
Recall that for $\M=( \sset, \times, 1)$, the cofibration are precisely the monomorphisms.
A direct consequence of Remark \ref{rmk-reedy-map} is that:

\begin{prop}
\begin{enumerate}
\item For an arbitrary $\M$, a cofibrant  lax diagram $\F \in \Lax(\C,\M)_n$ in the Reedy structure is excellent if for every $z$ the canonical map
$$ \delta_z:\latch(\F_{AB},z) \to \laxlatch(\F,z)$$
is a cofibration.
\item For $\M=( \sset, \times, 1)$, a cofibrant lax diagram $\F \in \Lax(\C,\M)_n$ in the Reedy structure is excellent \textbf{if and only if} for every $z$, the map $\delta_z$ is a cofibration. 
\end{enumerate}
\end{prop}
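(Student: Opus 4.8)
The plan is to read everything off the factorization recorded in Remark \ref{rmk-reedy-map}. The two notions of cofibrancy in play are detected on latching maps: $\F$ is $\Ub$-cofibrant exactly when $\Ub(\F)$ is cofibrant in $\kc$, i.e.\ when each \emph{classical} latching map $\latch(\F_{AB},z)\to\F(z)$ is a cofibration, whereas $\F$ is cofibrant in the Reedy structure on $\Lax(\C,\M)_n$ exactly when each lax-latching map $\laxlatch(\F,z)\to\F(z)$ is a cofibration. Remark \ref{rmk-reedy-map} relates them by factoring the former through the latter as $\latch(\F_{AB},z)\xrightarrow{\delta_z}\laxlatch(\F,z)\to\F(z)$.

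First I would prove (1). Since $\F$ is assumed Reedy cofibrant, $\laxlatch(\F,z)\to\F(z)$ is a cofibration for every $z$. If in addition each $\delta_z$ is a cofibration, then the composite $\latch(\F_{AB},z)\to\F(z)$ is a cofibration for every $z$, which is precisely the statement that $\Ub(\F)$ is cofibrant in $\kc$. Thus $\F$ is itself $\Ub$-cofibrant, and the identity $\F\xrightarrow{\sim}\F$ exhibits $\F$ as excellent. This already settles the `if' direction of (2), since in $\sset$ the cofibrations are exactly the monomorphisms.

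For the converse in (2) I would first reduce to a cofibrancy comparison. Because $\F$ is Reedy cofibrant, $\laxlatch(\F,z)\to\F(z)$ is a monomorphism, so in the factorization above the composite is a monomorphism if and only if $\delta_z$ is. Hence the condition `$\delta_z$ is a cofibration for every $z$' is equivalent to `$\F$ is $\Ub$-cofibrant', and the task becomes: a cofibrant, excellent $\F$ in $\sset$ is $\Ub$-cofibrant. Unwinding excellence, choose $\F\xrightarrow{\sim}\G$ with $\G$ $\Ub$-cofibrant; applying the same factorization to $\G$ shows $\delta_z^{\G}$ is a monomorphism, and I would transport this monomorphism back along the weak equivalence. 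Concretely, in the naturality square of $\delta_z$ attached to $\F\to\G$, the map $\F(z)\to\G(z)$ is a weak equivalence; arguing by contradiction, a pair of distinct simplices of $\latch(\F_{AB},z)$ collapsed by $\delta_z^{\F}$ would already be collapsed on $\F(z)$ (using that the lax-latching map of $\F$ is a monomorphism), and pushing this collision into $\G$ and invoking injectivity of $\delta_z^{\G}$ forces the comparison map $\latch(\F_{AB},z)\to\latch(\G_{AB},z)$ to collapse them too.

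The main obstacle is exactly this transport step, because monomorphisms are not homotopy invariant: the comparison map $\latch(\F_{AB},z)\to\latch(\G_{AB},z)$ induced by the weak equivalence need not be injective, and its behaviour is governed by the \emph{classical} Reedy structure of the hom-categories, which the lax Reedy structure on $\Lax(\C,\M)_n$ does not control. To close this gap I would exploit the special features of $\sset$: every object is cofibrant, the structure is left proper, and a Reedy cofibration of lax functors is in particular an objectwise monomorphism. Factoring $\F\xrightarrow{\sim}\G$ as a trivial cofibration $\F\to\widetilde{\G}$ followed by a trivial fibration then yields objectwise monomorphisms $\F(y)\hookrightarrow\widetilde{\G}(y)$, and the remaining point is to propagate injectivity to the classical latching objects, which I expect to follow by computing these latching colimits in $\sset$ as unions along the degree-raising (hence direct) latching category and checking injectivity simplex by simplex.
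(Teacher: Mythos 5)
Your proof of (1) and of the \emph{if} direction of (2) is exactly the paper's argument: Reedy cofibrancy gives that $\laxlatch(\F,z)\to\F(z)$ is a cofibration, so if each $\delta_z$ is a cofibration then the composite $\latch(\F_{AB},z)\to\F(z)$ is one too, i.e.\ $\F$ is $\Ub$-cofibrant and hence excellent (witnessed by the identity). The divergence is in the \emph{only if} direction of (2), and there your proposal has a genuine gap. The paper's proof of that direction consists of nothing more than the cancellation you already carried out: if the composite $\latch(\F_{AB},z)\to\laxlatch(\F,z)\to\F(z)$ is a monomorphism, then so is $\delta_z$. In other words, the paper reads the hypothesis as asserting that the classical latching maps of $\F$ itself are monomorphisms (that $\F$ is $\Ub$-cofibrant); it never attempts the homotopy-transport step you set up, from a weak equivalence $\F\xrightarrow{\sim}\G$ to an auxiliary $\Ub$-cofibrant $\G$.

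That transport step, which you honestly flag as the main obstacle, is not closed by your sketch. First, latching objects are colimits, and an objectwise monomorphism $\F\to\widetilde{\G}$ does not induce a monomorphism $\latch(\F_{AB},z)\to\latch(\widetilde{\G}_{AB},z)$; colimits do not preserve monomorphisms, which is the very phenomenon you identified. Second, the object $\widetilde{\G}$ produced by your factorization is not known to be $\Ub$-cofibrant (only $\G$ is), so even perfect control of $\latch(\F_{AB},z)\to\latch(\widetilde{\G}_{AB},z)$ would leave you with nothing to transport: you would need to pull $\Ub$-cofibrancy back along the trivial fibration $\widetilde{\G}\to\G$, which is the same problem over again. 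Third, computing the latching object of $\F_{AB}$ ``as a union, simplex by simplex'' presupposes exactly the classical Reedy cofibrancy of $\F_{AB}$ that you are trying to establish. So either you adopt the paper's reading --- take the hypothesis to mean that $\F$ itself is $\Ub$-cofibrant, in which case (2) follows in one line by cancellation of monomorphisms --- or you must supply an argument that excellence of a Reedy-cofibrant diagram forces $\Ub$-cofibrancy of that same diagram; neither your sketch nor the paper's own proof provides such an argument.
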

\begin{proof}
Indeed being cofibrant in the lax-Reedy structure  the canonical map hereafter is a cofibration:
$$\laxlatch(\F,z) \to \F(z).$$

Therefore  if in addition the maps $\delta_z:\latch(\F_{AB},z) \to \laxlatch(\F,z)$ is also a cofibration, then so is the composite:
$$ \latch(\F_{AB},z) \to \laxlatch(\F,z) \to \F(z).$$

Thus $\F_{AB}$ is Reedy cofibrant for all $(A,B) \in \Ob(\C)^2$ and $\F$ is excellent.\\

Assertion $(2)$ is elementary. Indeed if the composite
$$\latch(\F_{AB},z) \to \laxlatch(\F,z) \to \F(z)$$
is a monomorphism, then so is 
$$\latch(\F_{AB},z) \to \laxlatch(\F,z).$$
\end{proof}

\begin{df}
A morphism $\sigma : \F \to \G$ in $ \Lax(\C,\M)_n $ is an $\Ub$-cofibration if $\Ub(\sigma)$ is a cofibration in $\kc$. 
\end{df}
It follows immediately that $\Ub$-cofibrations are closed under composition and retract. Denote by $\Gamma: \kc \to \Lax(\C,\M)_n $ be the left adjoint of $\Ub$. 
\begin{prop}
Let $\M$ be a model category such that all objects are cofibrant.
With respect to the projective model structure, if for any generating cofibration $\sigma \in \kc$, $\Gamma(\sigma)$ is an $\Ub$-cofibration, then every cofibration in $ \Lax(\C,\M)_n $ is also a $\Ub$-cofibration.
\end{prop}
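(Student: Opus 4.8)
The plan is to leverage the cofibrant generation of the projective model structure on $\Lax(\C,\M)_n$. By Assertion $(3)$ of the theorem above, since all objects of $\M$ are cofibrant (and $\M$ is cofibrantly generated), the adjunction $\Gamma \dashv \Ub$ transfers the projective structure from $\kc$; in particular the generating cofibrations of $\Lax(\C,\M)_n$ are exactly the maps $\Gamma(\sigma)$ with $\sigma$ a generating cofibration of $\kc$. Consequently every cofibration of $\Lax(\C,\M)_n$ is a retract of a relative cell complex, that is, a retract of a transfinite composite of pushouts of maps of the form $\Gamma(\sigma)$. Since the class of $\Ub$-cofibrations is already closed under retracts, it suffices to prove that every such relative cell complex is a $\Ub$-cofibration.

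First I would dispose of the closure properties that are essentially formal. The $\Ub$-cofibrations are closed under composition, so to treat transfinite composites it is enough to know that $\Ub$ commutes with the transfinite colimits appearing in a cell complex. This holds because the free lax functor monad $\Ub\Gamma$ is assembled from coproducts and tensor products, both of which preserve filtered colimits; hence filtered colimits in $\Lax(\C,\M)_n$ are created by $\Ub$, and a transfinite composite of $\Ub$-cofibrations is again a $\Ub$-cofibration, the cofibrations of $\kc$ being closed under transfinite composition. This reduces the entire statement to a single cell attachment: given a generating cofibration $\Gamma(\sigma)\colon \Gamma(A)\to \Gamma(B)$ and a map $\Gamma(A)\to \F$, with pushout $\G=\F\sqcup_{\Gamma(A)}\Gamma(B)$, I must show that $\Ub(\F)\to\Ub(\G)$ is a cofibration in $\kc$.

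This last point is the heart of the matter, and the place where the hypotheses genuinely enter. The obstacle is that $\Ub$ is a right adjoint and does not preserve pushouts, so applying $\Ub$ to the pushout square does not produce a pushout in $\kc$. Instead I would analyze $\Ub(\G)$ through the standard cell-attachment filtration $\Ub(\F)=P_0 \to P_1 \to \cdots \to \colim_n P_n = \Ub(\G)$ for the free--forgetful adjunction, whose successive layers $P_{n-1}\to P_n$ are pushouts in $\kc$ of maps built from the iterated pushout--product of $\sigma$ with itself and with the relevant components of $\Ub(\F)$ (indexed by composable strings of $1$-morphisms in $\C$). The bottom layer is exactly governed by the hypothesis that $\Gamma(\sigma)$ is a $\Ub$-cofibration; every higher layer is a pushout of an iterated pushout--product of $\sigma$, and since $\sigma$ is a cofibration and every object of $\M$ is cofibrant, the pushout--product axiom for the monoidal model category $\M$ guarantees that each such map is a cofibration in $\kc$. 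Hence each $P_{n-1}\to P_n$ is a cofibration, and so is their composite $\Ub(\F)\to\Ub(\G)$.

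Putting the pieces together, the class of $\Ub$-cofibrations contains the generating cofibrations $\Gamma(\sigma)$, is closed under pushout of these (the cell-attachment step), under transfinite composition, and under retracts; it therefore contains every cofibration of $\Lax(\C,\M)_n$. The main obstacle is the cell-attachment step: I expect the bookkeeping for the filtration $P_n$ --- identifying its layers explicitly and verifying that the pushout--product axiom applies layer by layer, where the cofibrancy of all objects of $\M$ is used to control the terms coming from $\Ub(\F)$ --- to be the only genuinely delicate part, the remainder being formal consequences of cofibrant generation.
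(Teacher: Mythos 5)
Your proposal is correct and follows essentially the same route as the paper: reduce via cofibrant generation to a single cell attachment along a generating cofibration $\Gamma(\sigma)$, then analyze the pushout through a transfinite filtration whose layers are projective cofibrations, with the hypothesis that all objects of $\M$ are cofibrant entering exactly where you place it (closure of cofibrations under the tensor/pushout--product constructions). The paper's own proof is likewise a sketch that defers the detailed bookkeeping of this filtration to an external reference, so your level of detail matches its.
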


\begin{proof}[Sketch of proof]
Denote by $\I$ the generating set of cofibrations in $\kc$. By definition of the model structure on $ \Lax(\C,\M)_n$, the set $\Gamma(\I)$ constitutes a set  of generating cofibrations in $ \Lax(\C,\M)_n$ (see \cite{COSEC1}).

Then a cofibration in $ \Lax(\C,\M)_n$ is just a relative $\Gamma(\I)$-cell complex. Therefore it's enough to show that in a pushout square
\[
\xy
(0,18)*+{\Gamma \Aa}="W";
(0,0)*+{\Gamma \Ba}="X";
(30,0)*+{\F \cup^{\Gamma \Aa}\Gamma \Ba}="Y";
(30,18)*+{\F}="E";
{\ar@{->}^-{j}"X";"Y"};
{\ar@{->}^-{\alpha}"W";"X"};
{\ar@{->}^-{}"W";"E"};
{\ar@{->}^-{}"E";"Y"};
\endxy
\]
where $\alpha \in \I$, then the map $\F \to \F \cup^{\Gamma \Aa}\Gamma \Ba $ is also a $\Ub$-cofibration. To calculate that pushout, one starts by taking the pushout between the underlying diagrams in $\kc$.

 And as in any model category, projective cofibrations are closed under pushout; it follows that the first canonical map is also a projective cofibration. This map modifies $\F$ and all the trick is to build the pushout out of that modification. The hypothesis `all objects are cofibrant' is used to guarantee that cofibration are closed by tensor product. 

 In the end the map $\F \to \F \cup^{\Gamma \Aa}\Gamma \Ba $ is a transfinite composition of projective cofibrations and therefore is a projective cofibration. We refer the reader to \cite{COSEC1} for the details on that pushout. 
\end{proof}

\begin{nota}
\begin{enumerate}
\item For each pair $(A,B) \in \Ob(\C)^2$, let $p_{AB}$ be the projection functor:
$$p_{AB}: \kc \to \Hom(\C(A,B), \ul{M}).$$ 
\item $p_{AB}$ has a left adjoint $\delta_{AB}$ which is the `Dirac mass' (see \cite{COSEC1}). 
\item Let $\I_{AB}$ (resp. $\Ja_{AB}$) be a set of generating cofibrations (resp. trivial cofibrations) for  $\Hom(\C(A,B), \ul{M})$.
\end{enumerate}
\end{nota}
By lifting properties and adjunction one can clearly have:
\begin{lem}\label{lem-generation}
\begin{enumerate}
\item The sets 
$$ \coprod_{(A,B)} \{ \delta_{AB}(\alpha); \alpha \in \I_{AB} \}$$
$$ \coprod_{(A,B)} \{ \delta_{AB}(\alpha); \alpha \in \Ja_{AB} \} $$
constitutes a set of generating cofibrations (resp. trivial cofibrations) of $\kc$.
\item Similarly the two sets:
$$ \coprod_{(A,B)} \{ \Gamma[\delta_{AB}(\alpha)]; \alpha \in \I_{AB} \}$$
$$ \coprod_{(A,B)} \{ \Gamma[\delta_{AB}(\alpha)]; \alpha \in \Ja_{AB} \} $$
are generating set of (trivial) cofibrations for $ \Lax(\C,\M)_n$ 
\end{enumerate}
\end{lem}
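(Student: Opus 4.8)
The plan is to run the standard adjunction-and-lifting argument that recognizes generating (trivial) cofibrations through their injectivity class, exploiting that both model structures in play are \emph{right-induced}: the structure on $\kc=\prod_{A,B}\Hom[\C(A,B),\ul{M}]$ is the product (componentwise) structure, and the structure on $\Lax(\C,\M)_n$ is transferred along $\Gamma \dashv \Ub$. The recognition fact I would invoke is the usual one for cofibrantly generated model categories: if a set of maps with small domains is such that the maps injective with respect to it are exactly the trivial fibrations, then that set generates the cofibrations (and likewise with ``fibration'' for the trivial cofibrations).

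For Assertion $(1)$, I would first recall that in the product structure a map $f$ is a (trivial) fibration precisely when each component $p_{AB}(f)$ is a (trivial) fibration. Since $\delta_{AB}$ is left adjoint to $p_{AB}$, the adjunction bijection converts a lifting problem against $\delta_{AB}(\alpha)$ into a lifting problem against $\alpha$, so $f$ has the right lifting property with respect to $\delta_{AB}(\alpha)$ if and only if $p_{AB}(f)$ has the right lifting property with respect to $\alpha$. Running this over all $(A,B)$ and all $\alpha\in\I_{AB}$ at once, $f$ is injective with respect to $\coprod_{(A,B)}\{\delta_{AB}(\alpha);\alpha\in\I_{AB}\}$ exactly when each $p_{AB}(f)$ is injective with respect to $\I_{AB}$, i.e. when each $p_{AB}(f)$ is a trivial fibration, i.e. when $f$ is a trivial fibration. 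By the recognition fact this identifies the displayed set as a generating set of cofibrations; replacing $\I_{AB}$ by $\Ja_{AB}$ and ``trivial fibration'' by ``fibration'' handles the trivial cofibrations.

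For Assertion $(2)$, I would use that in the transferred structure $g$ is a (trivial) fibration in $\Lax(\C,\M)_n$ if and only if $\Ub(g)$ is a (trivial) fibration in $\kc$. Applying the $\Gamma\dashv\Ub$ adjunction exactly as before, $g$ has the right lifting property against $\Gamma[\delta_{AB}(\alpha)]$ if and only if $\Ub(g)$ has the right lifting property against $\delta_{AB}(\alpha)$. Chaining this with the characterization from Assertion $(1)$, $g$ is injective with respect to $\coprod_{(A,B)}\{\Gamma[\delta_{AB}(\alpha)];\alpha\in\I_{AB}\}$ if and only if $\Ub(g)$ is a trivial fibration, hence if and only if $g$ is a trivial fibration. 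This is precisely the assertion that the set generates the cofibrations, and the variant with $\Ja_{AB}$ gives the generating trivial cofibrations.

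The argument is essentially formal, so the only points requiring care are the existence of the transferred structures and the smallness conditions feeding the recognition fact. The former is supplied by the earlier theorem (Assertion $(3)$, via the Schwede--Shipley transfer) together with the componentwise nature of the product structure on $\kc$; the latter reduces, through the left adjoints $\Gamma$ and $\delta_{AB}$ which preserve smallness relative to the relevant cell complexes, to the cofibrant generation already assumed for each $\Hom[\C(A,B),\ul{M}]$. I expect this smallness bookkeeping---checking that the domains of $\Gamma[\delta_{AB}(\alpha)]$ remain small with respect to the pertinent transfinite compositions---to be the main, though routine, obstacle, and I would defer its detailed verification to the transfer arguments of \cite{COSEC1}.
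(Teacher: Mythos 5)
Your proposal is correct and follows essentially the same route the paper intends: the paper offers no written proof beyond the phrase ``by lifting properties and adjunction,'' and your argument spells out exactly that --- using $\delta_{AB}\dashv p_{AB}$ to identify the injectives of $\coprod\delta_{AB}(\I_{AB})$ with the componentwise trivial fibrations of $\kc$, and $\Gamma\dashv\Ub$ together with the right-transferred structure on $\Lax(\C,\M)_n$ to lift this to Assertion $(2)$. The smallness bookkeeping you defer to the transfer arguments of the cited work is likewise where the paper places it.
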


\subsection{Excellent co-Segal precategories}
From now we take $\C= \sxop$. Recall that $\sxop$ is entirely direct (as $\Depiop$) so the Reedy and projective model structure on both $\kc$ and $\Lax(\C,\M)_n$ are the same.\\

In this case we can explicitly write a formula for the left adjoint $\Gamma$.\\
For $\G \in \prod_{A,B} \Hom[\sx(A,B)^{\tx{op}}, \ul{M}]$, $\Gamma \G$ is given by the formula:
$$\Gamma\G (z)= \G(z) \sqcup (\coprod_{(s_1,..., s_l);   \otimes(s_i)=z ; s_i\neq z} \G(s_1) \otimes \cdots \otimes \G(s_l)).$$

\begin{prop}
Let $\M=(\ul{M}, \otimes, I)$ be a monoidal category having an initial object $0$ and such that for every $m \in M$, $0 \otimes m \cong 0$.\\
 
If $A \neq B$ then for every $\G \in \Hom[\sx(A,B)^{\tx{op}}, \ul{M}]$  we have:
$$\Ub (\Gamma \delta_{AB}\G)\cong \delta_{AB}(\G). $$
\end{prop}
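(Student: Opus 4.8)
The plan is to unwind both sides explicitly and then reduce the statement to a bookkeeping argument on sources and targets, combined with the annihilation property of the initial object. First I would identify the Dirac mass $\delta_{AB}\G$. Since $\delta_{AB}$ is left adjoint to the projection $p_{AB}\colon \kc \to \Hom(\C(A,B),\ul{M})$ onto a factor of a product, and each factor $\Hom(\C(A',B'),\ul{M})$ has the constant functor at $0$ as its initial object, this left adjoint is ``extension by initial objects'': $\delta_{AB}\G$ is $\G$ in the $(A,B)$-slot and the constant functor at $0$ in every other slot. Concretely, for a $1$-morphism $z\in\C(A',B')$,
\[
(\delta_{AB}\G)(z)=\begin{cases}\G(z) & \tx{if } (A',B')=(A,B),\\ 0 & \tx{otherwise.}\end{cases}
\]

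Next I would apply $\Ub\Gamma$ by evaluating the explicit formula for $\Gamma$ pointwise. For $z\in\C(A',B')$ this gives
\[
\Ub(\Gamma \delta_{AB}\G)(z)=(\delta_{AB}\G)(z)\sqcup\coprod_{\substack{(s_1,\dots,s_l):\ \otimes(s_i)=z\\ s_i\neq z}}(\delta_{AB}\G)(s_1)\otimes\cdots\otimes(\delta_{AB}\G)(s_l).
\]
Every decomposition in the coproduct has $l\geq 2$, since $l=1$ forces $s_1=z$. The key step is to show each summand is initial. Writing the chain of intermediate objects $A'=A_0\to A_1\to\cdots\to A_l=B'$ with $s_i\in\C(A_{i-1},A_i)$, a factor $(\delta_{AB}\G)(s_i)$ is non-initial only when $s_i\in\C(A,B)$, i.e. $A_{i-1}=A$ and $A_i=B$. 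If all $l\geq 2$ factors satisfied this, then $A_0=A$ and $A_1=B$ coming from $s_1$, while $A_1=A$ coming from $s_2$, forcing $A=B$, contrary to hypothesis. Hence at least one factor equals $0$.

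I would then conclude that a monoidal product with a zero tensorand is itself initial: we have $0\otimes m\cong 0$ by hypothesis, and when the zero sits in an interior slot I bring it to an outer position using associativity and the symmetry of $\otimes$ (the ambient $\M$ being symmetric monoidal, so that $0$ is a two-sided annihilator). Thus every summand of the coproduct is $\cong 0$; since a coproduct of initial objects is initial and $Y\sqcup 0\cong Y$, we obtain pointwise $\Ub(\Gamma \delta_{AB}\G)(z)\cong(\delta_{AB}\G)(z)$, matching $\G(z)$ on the $(A,B)$-slot and $0$ elsewhere. These isomorphisms are natural in $z$, coming from the universal property of the coproduct and the functoriality of $\otimes$, so they assemble into the asserted isomorphism $\Ub(\Gamma \delta_{AB}\G)\cong\delta_{AB}\G$ in $\kc$.

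The only genuinely delicate point is the vanishing of the coproduct summands, which splits into two observations: first, that every nontrivial decomposition of a morphism with distinct endpoints must contain a factor outside $\C(A,B)$ (the source/target chain argument, where the assumption $A\neq B$ is used decisively — for $A=B$ the composites $\C(A,A)^{\otimes l}$ survive and the statement fails); and second, that a tensor product with a single initial tensorand is initial even when that tensorand is not leftmost, which is precisely where the annihilation hypothesis, promoted to a two-sided statement via the symmetry of $\M$, enters. Everything else — identifying $\delta_{AB}$ as extension by initial objects, the pointwise evaluation of $\Gamma$, and the naturality of the resulting isomorphism — is routine.
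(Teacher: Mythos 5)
Your proof is correct and follows essentially the same route as the paper's: identify $\delta_{AB}\G$ as extension by the initial object, expand $\Gamma$ pointwise, and observe that when $A \neq B$ every nontrivial decomposition of a $1$-morphism must contain a factor outside $\C(A,B)$, so every tensor summand is annihilated and only $\G(z)$ survives. You are in fact more careful than the paper on two points --- the explicit source/target chain argument, and the observation that the stated one-sided hypothesis $0 \otimes m \cong 0$ needs to be promoted to a two-sided annihilation (via symmetry or closedness of $\M$) to kill a $0$ sitting in a non-leftmost tensor slot --- but these are elaborations of the same argument, not a different one.
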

\begin{proof}
Indeed we have $\delta_{AB}(\G)(s)= 0$ if $s\notin \C(A,B)$. Therefore is we have an $l$-tuple $(s_1,..., s_l)$ of composable morphisms such that the composite is $z$ and $s_i\neq z$; then if $A\neq B$, necessarily there is at least one $s_i \notin \C(A,B)$. 

It follows that the only summand in  $\Gamma(\delta_{AB}(\G))(z)$ that is different from $0$ is $\G(z)$ and the proposition follows. 
\end{proof}

\begin{cor}
If $A \neq B$ then for any cofibration $\alpha$ of $\Hom[\sx(A,B)^{\tx{op}}, \ul{M}]$, 
$\Gamma(\delta_{AB}(\alpha))$ is a $\Ub$-cofibration.
\end{cor}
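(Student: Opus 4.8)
The plan is to deduce the corollary directly from the preceding Proposition, combined with the elementary fact that the Dirac-mass functor $\delta_{AB}$ preserves cofibrations. Recall that the model structure on $\kc=\prod_{A,B}\Hom[\C(A,B),\ul{M}]$ is the product structure, so a map is a cofibration if and only if each of its components is one. Now $\delta_{AB}(\alpha)$ is the map whose $(A,B)$-component is $\alpha$ and whose component at every other pair $(A',B')$ is the identity of the constant-at-$0$ diagram; since identity maps are cofibrations and $\alpha$ is a cofibration by hypothesis, $\delta_{AB}(\alpha)$ is a cofibration in $\kc$. (Equivalently, $\delta_{AB}$ is left adjoint to the projection $p_{AB}$, which is right Quillen for the product structure, so $\delta_{AB}$ is left Quillen.)

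First I would upgrade the isomorphism $\Ub(\Gamma\delta_{AB}\G)\cong\delta_{AB}(\G)$ of the Proposition from a pointwise statement into a \emph{natural} isomorphism of functors $\Ub\Gamma\delta_{AB}\cong\delta_{AB}$ on $\Hom[\sx(A,B)^{\tx{op}},\ul{M}]$, under the standing hypotheses on $\M$ (an initial object $0$ with $0\otimes m\cong 0$). Inspecting the explicit formula for $\Gamma$, this isomorphism is nothing but the canonical inclusion of the summand $\G(z)$ into the coproduct $\Gamma\G(z)$: when $A\neq B$, every non-trivial composable tuple $(s_1,\dots,s_l)$ with $\otimes(s_i)=z$ and $s_i\neq z$ must contain a factor outside $\C(A,B)$, at which $\delta_{AB}(\G)$ takes the value $0$, so that the corresponding tensor summand vanishes by $0\otimes m\cong 0$. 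Since the coproduct decomposition is functorial and morphisms out of $0$ are unique, this inclusion is natural in $\G$.

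With naturality in hand, applying the natural isomorphism to a morphism $\alpha:\G\to\G'$ yields a commuting square whose vertical arrows are the isomorphism components, whose bottom arrow is $\delta_{AB}(\alpha)$, and whose top arrow is $\Ub(\Gamma(\delta_{AB}(\alpha)))$. Because $\delta_{AB}(\alpha)$ is a cofibration and cofibrations are stable under isomorphism, the top arrow $\Ub(\Gamma(\delta_{AB}(\alpha)))$ is a cofibration in $\kc$; by definition this says precisely that $\Gamma(\delta_{AB}(\alpha))$ is a $\Ub$-cofibration, as claimed.

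The main obstacle is the naturality verification, since the Proposition is phrased object-by-object. One must check that for a morphism $\G\to\G'$ the induced maps on the vanishing tensor summands are the unique maps out of the initial object, and hence do not interfere with the identification on the surviving $\G(z)$-component. This is routine, but it does genuinely rely on the monoidal product preserving the initial object in each variable, which is exactly the hypothesis $0\otimes m\cong 0$ inherited from the Proposition; without it the extra summands would not collapse and the identification with $\delta_{AB}$ would fail.
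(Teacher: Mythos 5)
Your proof is correct and follows essentially the same route the paper intends: the corollary is stated there without proof precisely because it is meant to follow from the preceding proposition exactly as you argue, by combining the identification $\Ub\Gamma\delta_{AB}\cong\delta_{AB}$ with the fact that $\delta_{AB}$ (being left adjoint to the projection $p_{AB}$, hence left Quillen for the product structure) preserves cofibrations, and then invoking isomorphism-invariance of cofibrations. Your explicit verification that the isomorphism is natural in $\G$ (via the vanishing of the tensor summands and uniqueness of maps out of the initial object) is a detail the paper leaves implicit.
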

\subsubsection{Obstruction of Excellence} 
From the previous corollary together with Lemma \ref{lem-generation}, it's clear that if  for every $A \in \Ob(\C)$ and any $\alpha \in \I_{AA}$, we have 
$\Gamma(\delta_{AA}(\alpha))$ is a $\Ub$-cofibration; then every cofibration is $\Ub$-cofibration. One can observe that we no longer have $\Ub[\Gamma(\delta_{AA}(G))] \cong \Gamma(\delta_{AA}(G)) $. Indeed if $z=(A,A,A...,A)$, there can be non trivial summand that contain a tensor product in $\Gamma(\delta_{AA}(G))(z)$. 

\begin{rmk}
It's precisely the presence of \emph{algebraic data} that \emph{kills} the `projectiveness' of cofibrations.  The main reason is that the category $\Ar(\M)$ of arrows of $\M$ with it's projective model structure; cofibration are not (necessarily) closed under tensor product.  
\end{rmk}

One can establish the following.
\begin{prop}
If $\M=(\ul{M}, \otimes, I)$ is a monoidal model category such that all objects are cofibrant then:

equivalent. 
\begin{enumerate}
\item In the adjunction $$\Ub: \Lax[\sxop,\M]_n  \rightleftarrows \prod_{A,B}\Hom[\sx(A,B)^{\tx{op}}, \ul{M}]: \Gamma$$ 
for every cofibrant object in $\G \in \kc$ then $\Gamma(\G)$ is $\Ub$-cofibrant and hence excellent. It follows that for any $\G$, $\Gamma \G$ is excellent. 
\item Every $\M$-category is excellent
\end{enumerate}
\end{prop}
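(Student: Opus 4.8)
The plan is to prove statement (1) first, then to deduce both the excellence of $\Gamma\G$ for arbitrary $\G$ and statement (2). For (1) I would invoke the reduction established in the preceding ``sketch of proof'' Proposition: since all objects of $\M$ are cofibrant, it suffices to check that $\Gamma(\sigma)$ is a $\Ub$-cofibration for every \emph{generating} cofibration $\sigma$ of $\kc$. By Lemma \ref{lem-generation} these generators are the maps $\delta_{AB}(\alpha)$ with $\alpha\in\I_{AB}$, so I only need $\Ub\Gamma(\delta_{AB}(\alpha))$ to be a cofibration in $\kc$. For $A\neq B$ the Corollary above already gives $\Ub\Gamma(\delta_{AB}(\alpha))\cong\delta_{AB}(\alpha)$, a generating cofibration; the only remaining case is the diagonal $A=B$, which is exactly the obstruction flagged in the preceding subsection.

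The heart of the matter is therefore the diagonal, and this is the step I expect to be the main obstacle. Here I would expand $\Ub\Gamma(\delta_{AA}(\alpha))$ using the explicit formula for $\Gamma$: for a loop-chain $z$ at $A$ one has $\Ub\Gamma(\delta_{AA}\G)(z)\cong\G(z)\sqcup\coprod\G(s_1)\otimes\cdots\otimes\G(s_l)$, the coproduct running over the decompositions $\otimes(s_i)=z$ into nontrivial $(A,A)$-chains (all other factorizations contribute a $\0$ summand because $0\otimes m\cong 0$). Consequently $\Ub\Gamma(\delta_{AA}(\alpha))$ is assembled, slot by slot, from iterated pushout--products of $\alpha$ against the relevant (co)domain objects, and the claim reduces to the standard filtration of such a ``free'' pushout (as carried out in \cite{COSEC1} and recalled in the sketch above): each filtration stage is a pushout of a map obtained by tensoring $\alpha$ with cofibrant objects. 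This is precisely where the hypothesis that \emph{all objects of $\M$ are cofibrant} is indispensable: by the pushout--product axiom it forces cofibrations to be closed under $\otimes$, so every stage is a projective cofibration and the transfinite composite $\Ub\Gamma(\delta_{AA}(\alpha))$ is a projective cofibration as well. With the diagonal settled the hypothesis of the reduction Proposition is verified, so every cofibration of $\Lax[\sxop,\M]_n$ is a $\Ub$-cofibration. In particular, for cofibrant $\G$ the diagram $\Gamma\G$ is cofibrant (being the image of $\G$ under the left Quillen functor $\Gamma$), hence the cofibration $\0\to\Gamma\G$ is a $\Ub$-cofibration; since $\Ub\Gamma(\0)\cong\0$ this means $\Ub\Gamma\G$ is cofibrant, i.e. $\Gamma\G$ is $\Ub$-cofibrant and therefore excellent.

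To pass from cofibrant $\G$ to arbitrary $\G$, I would first observe that under the standing hypothesis $\Gamma$ preserves \emph{all} weak equivalences, not merely those between cofibrant objects: a levelwise weak equivalence $\G\to\G'$ induces on each slot a coproduct of tensor products of weak equivalences between cofibrant objects, which is again a weak equivalence, and weak equivalences in $\Lax[\sxop,\M]_n$ are detected by $\Ub$. Taking a cofibrant replacement $\G^{c}\to\G$ in $\kc$ then produces a weak equivalence $\Gamma\G^{c}\to\Gamma\G$ whose source is $\Ub$-cofibrant by the previous paragraph; thus $\Gamma\G$ is weakly equivalent to a $\Ub$-cofibrant diagram and is excellent.

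Finally, for (2) I would regard a strict $\M$-category $\Ca$ as the co-Segal diagram $\F$ that is constant on each hom-category with value $\Ca(A,B)$, the laxity maps encoding the composition. Its underlying object $\Ub\F$ is then the family of constant diagrams $z\mapsto\Ca(A,B)$ on the direct categories $\sx(A,B)^{\tx{op}}$, and the point is that such constant diagrams are projectively (equivalently Reedy) cofibrant. Indeed the value $\Ca(A,B)$ is cofibrant because all objects of $\M$ are, while the latching category of $\sx(A,B)^{\tx{op}}$ is empty at the minimal length-one chain (yielding the cofibration $\0\to\Ca(A,B)$) and connected at every longer chain (the length-one chain maps to each of them, so the latching colimit of a constant diagram is again $\Ca(A,B)$ and the latching map is an isomorphism); verifying this connectivity from the structure of $\sxop$ in \cite{COSEC1} is the one remaining structural input. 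Hence $\Ub\F$ is cofibrant, $\F$ is $\Ub$-cofibrant, and every $\M$-category is excellent; together with (1) this yields the asserted equivalence.
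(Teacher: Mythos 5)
Your route differs from the paper's, and it breaks down at exactly the step you yourself flag as the main obstacle: the diagonal generators $\Gamma(\delta_{AA}(\alpha))$. The inference you use there --- all objects of $\M$ are cofibrant, hence cofibrations in $\M$ are closed under $\otimes$, hence ``every filtration stage is a projective cofibration'' --- is a non sequitur. Closure of cofibrations of $\M$ under tensor only gives that $\Ub\Gamma(\delta_{AA}(\alpha))$ is a \emph{levelwise} cofibration; what you must produce is a projective (equivalently Reedy, since $\sxop$ is direct) cofibration in $\kc$, i.e.\ you must control the \emph{relative latching maps}, and nothing in your argument computes them. This is precisely the obstruction the paper isolates in the remark of the subsubsection ``Obstruction of Excellence'': in $\Ar(\M)$ with its projective structure, cofibrations are \emph{not} closed under levelwise tensor product, even when all objects of $\M$ are cofibrant. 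Concretely, tensoring the projective cofibration $\Fb^{0}(\iota)$ levelwise with an object $(C_0 \xrightarrow{c} C_1)$ of $\Ar(\M)$ has relative latching map the pushout--product of $\iota$ with $c$, which need not be a cofibration when $c$ is not one; so ``tensoring $\alpha$ with cofibrant objects'' does not stay inside projective cofibrations. Deferring to ``the standard filtration in \cite{COSEC1}'' does not fill this, since the very point at issue is whether those filtration stages are projective cofibrations of diagrams rather than merely levelwise ones.

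The paper avoids your reduction entirely and argues at the level of cofibrant objects: the structural input is the Claim that for any presentation $(s_1,\ldots,s_l)$ of $z$ one has $\partial_z \cong \partial_{s_1}\times\cdots\times\partial_{s_l}$, which combined with the formula for $\Gamma$ yields
$$\latch(\Gamma\G,z)\cong \latch(\G,z)\sqcup \coprod \latch(\G,s_1)\otimes\cdots\otimes\latch(\G,s_l),$$
so that for cofibrant $\G$ the latching map of $\Gamma\G$ at $z$ is a coproduct of tensor products of cofibrations with cofibrant domains, hence a cofibration. If you want to keep your generating-cofibration route, you need this same Claim to identify the relative latching maps of $\Ub\Gamma(\delta_{AA}(\alpha))$ as coproducts of iterated pushout--products of the relative latching maps of $\alpha$; the pushout--product axiom then finishes, and your deduction of statement (1) from the reduction Proposition becomes valid. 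Two smaller points: your weak equivalence $\Gamma\G^{c}\to\Gamma\G$ points \emph{into} $\Gamma\G$, whereas Definition \ref{excellent-lax-diag} asks for a weak equivalence \emph{out of} the diagram into a $\Ub$-cofibrant one, so the ``arbitrary $\G$'' clause needs an extra word (the paper is equally silent here); and your proof of (2) is correct --- it is a hands-on version of the paper's argument, which instead observes that constant diagrams lie in the image of the left Quillen functor $\Fb^{E}$ associated to the initial object $E=(A,B)$ of $\sx(A,B)^{op}$.
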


\begin{proof}
Assertion $(2)$ is clear since all object are cofibrant. In fact  $\M$-categories correspond to the locally constant lax diagrams. And given a category with an initial object $E$, e.g $\sx(A,B)^{op}$ with $E=(A,B)$, constant diagrams correspond to the essential image of the left adjoint $\Fb^{E}$ of the evaluation at $E$. And since $\Fb^{E}$ is a left Quillen functor with respect to the projective model structure, then the result follows.\\
One can alternatively check this by lifting properties agains all fibrations.\\

For Assertion $(1)$ we proceed as follows.
Let $z$ be a $1$-morphism in $\sxop(A,B)$. Recall that $z$ is a sequence $(A,..., A_i, ...,B)$.  If $\partial_z$ represents the classical \emph{latching category} at $z$; then the particularity of $\sxop$ is that:
\begin{claim}
For any $1$-morphism $z$ of $\sxop$, and any presentation $(s_1,..., s_l)$ of $z$,  we have an isomorphism:
$$\partial_z \cong  \partial_{s_1} \times \cdots \times \partial_{s_l} $$
\end{claim}
In fact we leave the reader to verify that this is true in any \emph{direct divisible} $\lr$-category (see \cite{Colax_Reedy}). And thanks to this isomorphism and from the formula of $\Gamma$ one has that for any $\G$:
$$\latch(\Gamma\G,z) \cong \latch(\G,z) \sqcup \coprod_{(s_1,..., s_l);   \otimes(s_i)=z ; s_i\neq z} \latch(\G,s_1) \otimes \cdots \otimes \latch(\G,s_l)$$

If $\G$ is cofibrant then, by definition every map
$$\latch(\G,s_i) \to \G(s_i) \hspace{0.5in} \tx{(including $s_i=z$)}$$
is a cofibration (with cofibrant domain). And since cofibrations with cofibrant domain are closed under tensor product and coproduct, one clearly have that the canonical map:
$$\latch(\Gamma\G,z) \to (\Gamma\G)(z)  $$ 
is also a cofibration.
\end{proof}
\section*{New model structure for precategories}
In the following we still work with $\C=\sxop$ for some set $X$. Let's write for simplicity $\msx=\Lax[\sxop,\M]_n$. We assume that $\M$ is a combinatorial monoidal model category. It can be shown that the projective (=Reedy) model structure in the previous sections is also combinatorial (see \cite{COSEC1}).\\

In this section we are going to construct another model structure on $\msx$ that will be used in the upcoming sections. We will use Smith's theorem (see for example \cite{Hov-model}).\\

Consider the following  maps in $\msx$.
\begin{enumerate}
\item Let $\I_{ex}$ be the set of maps $ \coprod_{A\neq B} \{ \Gamma[\delta_{AB}(\alpha)]; \alpha \in \I_{AB} \}$;
\item  Let $\Ja_{ex}$ be the set of maps $ \coprod_{A\neq B} \{ \Gamma[\delta_{AB}(\alpha)]; \alpha \in \Ja_{AB} \} $;
\item Let $\Wa$ be the class of maps $\sigma$ such that  for $A\neq B$, the component $\Ub(\sigma)_{AB}$ is a level-wise weak equivalence.  We will identify $\Wa$ with the subcategory generated by these maps in $\msx$. 
\end{enumerate}
It follows that any (old) weak equivalence in $\msxproj$ is in $\Wa$ and that $\Wa$ is closed by composition and retract. 

Using the fact we already have a model structure on $\msx$  and thanks to Smith's theorem one has:

\begin{thm}\label{thm-new-model}
There is a cofibrantly generated model structure on $\msx$ with :
\begin{enumerate}
\item $\I_{ex}$ as the set of generating cofibrations;
\item $ \Ja_{ex}$ as the set of generating trivial cofibrations;
\item $\Wa$ as the subcategory of weak equivalences.
\end{enumerate}
The model structure is combinatorial and will be denoted by $\msxex$.\\ 

The identity functor $\msxproj \to \msxex$ is a right Quillen functor.
\end{thm}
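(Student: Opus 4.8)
The plan is to verify the hypotheses of Smith's recognition theorem for combinatorial model structures, taking $\cof(\I_{ex})$ as the cofibrations and $\Wa$ as the weak equivalences; the three listed properties then fall out, combinatoriality is automatic since $\msx$ is locally presentable and the generators form sets, and the right Quillen claim is immediate. The underlying category $\msx$ is locally presentable because the projective ($=$ Reedy) structure is already combinatorial, so the only preliminary point is that $\Wa$ is of the kind Smith's theorem requires: it satisfies two-out-of-three and is closed under retracts, since levelwise weak equivalences in each $\Hom[\sx(A,B)^{\op}, \ul M]$ are, and it is an accessible, accessibly embedded subcategory of the arrow category, being the preimage, under the accessible functor $\Ub$ followed by the off-diagonal projections $p_{AB}$ for $A\neq B$, of the accessible class of weak equivalences of the combinatorial category $\M$. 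I would record this by citing the standard accessibility of weak equivalences in a combinatorial model category.

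The computational heart is the pair of adjunctions $\Gamma \dashv \Ub$ and $\delta_{AB} \dashv p_{AB}$, which converts every lifting question about $\I_{ex}$ and $\Ja_{ex}$ into a componentwise question in the projective diagram categories. Explicitly, a map $p$ lies in $\inj(\I_{ex})$ (resp. $\inj(\Ja_{ex})$) if and only if, for every $A\neq B$, the component $\Ub(p)_{AB}$ has the right lifting property against $\I_{AB}$ (resp. $\Ja_{AB}$), i.e. is a projective trivial fibration (resp. a projective fibration). Two consequences follow at once. First, $\inj(\I_{ex})\subseteq \Wa$, since a projective trivial fibration is in particular a levelwise weak equivalence; this is Smith's injectivity hypothesis. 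Second, and this is the key lemma, $\inj(\Ja_{ex})\cap \Wa \subseteq \inj(\I_{ex})$: for $p$ in the left-hand class each $\Ub(p)_{AB}$ with $A\neq B$ is simultaneously a projective fibration and a levelwise weak equivalence, hence a projective trivial fibration.

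It remains to show that $\Wa\cap\cof(\I_{ex})$ is closed under pushout and transfinite composition, which I would obtain by identifying it with $\cof(\Ja_{ex})$. For $\cof(\Ja_{ex})\subseteq \Wa\cap\cof(\I_{ex})$ I use that $\Ja_{ex}\subseteq \Ja$, so $\cof(\Ja_{ex})\subseteq\cof(\Ja)$ consists of old projective trivial cofibrations, which are levelwise weak equivalences and a fortiori in $\Wa$; and that each generator $\Gamma[\delta_{AB}(\alpha)]$ with $\alpha\in\Ja_{AB}$ lies in $\cof(\I_{ex})$ because $\delta_{AB}$ and $\Gamma$ are left adjoints and $\alpha\in\cof(\I_{AB})$. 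For the reverse inclusion I run the small object argument for $\Ja_{ex}$ on a map $f\in \Wa\cap\cof(\I_{ex})$, factoring $f = p\circ i$ with $i\in\tx{cell}(\Ja_{ex})$ and $p\in\inj(\Ja_{ex})$; two-out-of-three gives $p\in\Wa$, the key lemma gives $p\in\inj(\I_{ex})$, and since $f\in\cof(\I_{ex})$ lifts against $p$ the retract argument exhibits $f$ as a retract of $i$, whence $f\in\cof(\Ja_{ex})$. Smith's theorem then produces the asserted model structure, and the inclusions $\I_{ex}\subseteq\I$ and $\Ja_{ex}\subseteq\Ja$ show that $\tx{id}:\msxproj\to\msxex$ preserves fibrations and trivial fibrations, hence is right Quillen. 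I expect the main obstacle to be the reverse inclusion $\Wa\cap\cof(\I_{ex})\subseteq\cof(\Ja_{ex})$, which rests entirely on the key lemma; the only other delicate point is verifying once and for all that $\Wa$ is accessible, so that Smith's theorem genuinely applies.
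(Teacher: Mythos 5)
Your proposal is correct, and at the level of strategy it coincides with the paper's proof: both apply Smith's recognition theorem to exactly the data $(\I_{ex},\Ja_{ex},\Wa)$, using that $\msx$ is locally presentable. The difference is one of completeness, and it is substantial. The paper's own proof consists of two observations only: pushouts along maps of $\Ja_{ex}$ are old trivial cofibrations, hence old weak equivalences, hence in $\Wa$; and the identity $\msxproj \to \msxex$ is right Quillen because the new generating sets are subsets of the old ones. Everything else is dismissed as ``easily verified''. What you supply is precisely the part that is not automatic: (i) the transposition through the adjunctions $\Gamma\dashv\Ub$ and $\delta_{AB}\dashv p_{AB}$, identifying $\inj(\I_{ex})$ (resp.\ $\inj(\Ja_{ex})$) with the maps whose off-diagonal components $\Ub(p)_{AB}$ are projective trivial fibrations (resp.\ projective fibrations); (ii) the key lemma $\inj(\Ja_{ex})\cap\Wa\subseteq\inj(\I_{ex})$, which is the compatibility condition every recognition theorem requires and which the paper never checks; (iii) the identification $\Wa\cap\cof(\I_{ex})=\cof(\Ja_{ex})$ via the small-object/retract argument, which is what actually justifies the statement that $\Ja_{ex}$ is a set of generating \emph{trivial} cofibrations --- Smith's theorem by itself only guarantees that \emph{some} such set exists; and (iv) the accessibility of $\Wa$ (your phrasing should really say ``preimage of the levelwise weak equivalences of the combinatorial projective structures on the $\Hom[\sx(A,B)^{\op},\ul{M}]$'' rather than of $\M$ itself, but that is the same standard argument one functor further along). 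So your proposal is not a different route but a genuine completion of the paper's sketch; it makes visible that the real content hidden behind ``easily verified'' is the off-diagonal transfer of the projective fibration/weak-equivalence interplay.
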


\begin{proof}
All the criterions of Smith's theorem are easily verified. Indeed since maps in $\Ja_{ex}$ are old trivial cofibrations, the pushout along a map in $\Ja_{ex}$ is an old trivial cofibration and in particular an old weak equivalence, thus in $\Wa$.\\

Finally (trivial) fibrations in $\msxproj$ are also new (trivial) fibrations since we have smaller set of generating (trivial) cofibrations.
\end{proof}

\section{Locally constant lax functors and enriched categories}
\subsubsection{Indiscrete or coarse category} Recall that the `object functor' 
$\Ob: \Cat \to \Set$ that takes a category $\Ba$ to it set of objects $\Ob(\Ba)$, has a left adjoint $\disc : \Set \to \Cat$ `the discrete functor'. It turns out that this functor has also a right adjoint $ \indisc : \Set \to \Cat$.  We will denote by $\ol{X}:= \indisc(X)$.  By definition for any category $\Ba$ and any set $X$ we have an isomorphism of sets:
$$\Hom(\Ba,\ol{X}) \cong \Hom(\Ob(\Ba),X)$$
functorial in $X$ and $\Ba$;
where the left-hand side is the set of functors from $\Ba$ to $\ol{X}$ while the right-hand side is the set of functions from $\Ob(\Ba)$ to $X$.  Below we give a brief description of $\ol{X}$.
\paragraph{Brief description of $\ol{X}$} The category $\ol{X}$ is the terminal \ul{connected} groupoid having $X$ as the set of objects. There is precisely a unique morphism between any pair of elements: $$\ol{X}(a,b)=\Hom_{\ol{X}}(a,b):= \{(a,b)\} \cong 1.$$
The composition is the unique one: the bijection $1 \times 1 \cong 1$. Given a function $g: \Ob(\Ba) \to X$, the associated functor  $\ol{g}: \Ba \to \ol{X}$ is given by the (unique)  constant functions  
$$\ol{g}_{UV}: \Ba(U,V) \to \ol{X}(g(U),g(V))\cong 1.$$
 
\begin{rmk}
 If $X$ has two elements then $\ol{X}$ is the ``\textbf{walking-isomorphism category}'' in the sense that any isomorphism in a category $\Ba$ is the same thing as a functor $\ol{X} \to \Ba$. 
\end{rmk}

\subsection{An adjunction lemma}
In classical $1$-category theory, given an indexing category $\J$ one define the colimit functor 
$$\colim: \Hom(\J,\M) \to \M$$ as the left adjoint of the constant functor $\cb_{\ast}: \M \to \Hom(\J,\M)$.\\
Below we extend, locally, this fact to (normal) lax-functor when $\J$ is a $2$-category. \\

\begin{df}
Say that a (normal) lax functor $ \Fa: \J \to \M$ is \textbf{locally constant} if for every $(i,j)$ the component $\Fa_{ij}: \J(i,j) \to \M$ is a constant functor.
\end{df}
The reader can check that such a (normal) lax functor is the same thing as a (semi) $\M$-category whose set of objects is $\Ob(\J)$; the hom-object between $i$ and $j$ is the value of $\Fa_{ij}$.\ \\

Denote by $\cb_{\ast}\Lax(\J,\M) \hookrightarrow \Lax(\J,\M)$ the full subcategory of locally constant lax functors and transformations which are icons (see \cite{Lack_icons}).

\begin{lem}\label{lax-to-cat}
The inclusion functor $\cb_{\ast}\Lax(\J,\M) \hookrightarrow \Lax(\J,\M)$ has a left adjoint.
\end{lem}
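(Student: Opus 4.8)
The plan is to build the left adjoint $L$ by taking colimits \emph{locally}, one hom-category at a time, and then to transport the laxity (algebraic) data of a lax functor into these colimits. On each hom-category $\J(i,j)$ the construction will restrict exactly to the classical colimit functor $\colim \colon \Hom(\J(i,j),\M) \to \M$, which is left adjoint to the constant functor $\cb_{\ast}$; the entire content of the lemma is that these pointwise adjunctions glue together compatibly with the lax-algebraic structure. Throughout I assume, as elsewhere in the paper, that $\ul{M}$ is cocomplete and that $\otimes$ preserves colimits in each variable. Concretely, given $\Fa \in \Lax(\J,\M)$, I would let $L(\Fa)$ have object set $\Ob(\J)$ and hom-objects
$$ L(\Fa)(i,j) := \colim_{\J(i,j)} \Fa_{ij}. $$
For each $1$-morphism $z \in \J(i,j)$ the colimit carries a structure map $\kappa_z \colon \Fa(z) \to L(\Fa)(i,j)$, natural in $z$; since the target is viewed as a constant functor on $\J(i,j)$, the family $(\kappa_z)_z$ is precisely an icon $\eta_\Fa \colon \Fa \to L(\Fa)$, which will serve as the unit of the adjunction.

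Next I would equip $L(\Fa)$ with a composition, turning it into a locally constant lax functor, i.e. a (semi) $\M$-category. The laxity $2$-cells of $\Fa$ give, for composable $z \in \J(j,k)$ and $w \in \J(i,j)$, maps $\Fa(z) \otimes \Fa(w) \to \Fa(z \cdot w)$ natural in $(z,w)$; postcomposing with $\kappa_{z \cdot w}$ yields a cocone from the functor $(z,w) \mapsto \Fa(z)\otimes \Fa(w)$ on $\J(j,k)\times\J(i,j)$ to $L(\Fa)(i,k)$. Because $\otimes$ preserves colimits in each variable, one has
$$ \colim_{\J(j,k)\times\J(i,j)} \bigl(\Fa(z)\otimes\Fa(w)\bigr) \cong L(\Fa)(j,k)\otimes L(\Fa)(i,j), $$
so the universal property produces the desired composition $L(\Fa)(j,k)\otimes L(\Fa)(i,j) \to L(\Fa)(i,k)$. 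Associativity then follows from the coherence axioms for the laxity maps of $\Fa$ together with the uniqueness clause of the universal property. Functoriality of $L$ is immediate: an icon $\Fa \to \G$ is a family of natural transformations $\Fa_{ij} \Rightarrow \G_{ij}$, which induce maps on colimits that respect the composition just built.

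Finally I would verify the adjunction. Let $\G$ be locally constant, with constant values $\G(i,j)$. Any icon $\phi \colon \Fa \to \G$ has components $\phi_{ij} \colon \Fa_{ij} \Rightarrow \mathrm{const}_{\G(i,j)}$, which are precisely cocones on $\Fa_{ij}$; each therefore factors uniquely as $\phi_z = \bar\phi_{ij}\circ \kappa_z$ through a map $\bar\phi_{ij}\colon L(\Fa)(i,j)\to \G(i,j)$. Compatibility of $\phi$ with the laxity of $\Fa$ and of $\G$, combined again with the universal property, shows that the $\bar\phi_{ij}$ assemble into a morphism $\bar\phi \colon L(\Fa)\to\G$ of locally constant lax functors with $\phi = \bar\phi\circ\eta_\Fa$. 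Uniqueness of this factorization gives the natural bijection $\Hom_{\cb_\ast\Lax}(L(\Fa),\G)\cong\Hom_{\Lax}(\Fa,\G)$, i.e. $L\dashv(\text{inclusion})$.

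The step I expect to be the main obstacle is making the composition on $L(\Fa)$ well defined and associative: everything rests on the tensor commuting with the local colimits, so that $L(\Fa)(j,k)\otimes L(\Fa)(i,j)$ is itself a colimit to which the universal property applies, and on checking that the associativity constraint for $L(\Fa)$ is forced by the lax functor axioms for $\Fa$. This is exactly the point at which the \emph{algebraic data} that complicates the homotopy theory elsewhere in the paper re-enters; once this compatibility between colimits and the laxity maps is secured, both the functoriality of $L$ and the adjunction isomorphism follow by formal manipulation of universal properties.
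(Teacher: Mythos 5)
Your proposal is correct and follows essentially the same route as the paper's own sketch: define $L(\Fa)$ locally by $\colim \Fa_{ij}$, use the fact that $\otimes$ distributes over colimits (the paper invokes monoidal closedness for this) to transport the laxity maps into a composition via the universal property of the colimit, and derive associativity from the coherence axioms of $\Fa$ together with uniqueness of maps out of a colimit. Your write-up additionally spells out the unit icon and the adjunction bijection, which the paper leaves implicit, but the underlying argument is identical.
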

We can rephrase the above adjunction using the previous observation that we have an adjunction $\Ob \dashv \indisc$ that is also valid for $2$-categories. This means that for any $2$-category $\J$ and any (nonempty) set $X$ then we have an isomorphism of \ul{sets}:
$$2\tx{-Func}(\J, \ol{X}) \cong \Hom[\Ob(\J),X].$$

The unit of this adjunction (when $X= \Ob(\J)$) gives a canonical $2$-functor $$\varepsilon_{\J}: \J \to \ol{\Ob(\J)}.$$
Then the lemma says essentially that the pullback functor 
$$\varepsilon_{\J \ast}: \Lax[\ol{\Ob(\J)}, \M] \to \Lax(\J,\M)$$
has a left adjoint 
$$\varepsilon_{\J !}: \Lax(\J,\M) \to \Lax[\ol{\Ob(\J)}, \M]$$  
when $\M$ is cocomplete.

\begin{note}
This situation is a left Kan extension for lax functors and there is a general statement for $2$-functors $\J \to \J'$ but we will not go through that here (see \cite{COSEC1}).
\end{note}

\begin{proof}[Sketch of proof]
For a lax functor $\Fa$ one construct the adjoint-transpose by taking the colimit of each component $\Fa_{ij}$ of $\Fa$. Let $m(i,j):= \colim \Fa_{ij}$. As $\M$ is monoidal \ul{closed}, colimits distribute over $\otimes$.

 Consider the following compatible diagram which ends at $m(i,k)$:
$$\Fa_{ij}s \otimes \Fa_{jk} t  \to \Fa_{ik}(s \otimes t) \to m(i,k)$$
in which $(s,t)$ runs through $\J(i,j) \times \J(j,k)$. \\

We get a unique map by universal property of the colimit:
$$\varphi: m(i,j) \otimes m(j,k) \to m(i,k).$$

For the coherence axiom, one considers the compatible diagram  ending at $m(i,l)$ as $(s,t,u)$ runs through $\J(i,j) \times \J(j,k) \times \J(k,l)$:
$$\Fa_{ij}s \otimes \Fa_{jk} t  \otimes \Fa_{kl}u \to \Fa_{il}(s \otimes t \otimes u) \to  m(i,l).$$

Note that there are two ways to go from $\Fa_{ij}s \otimes \Fa_{jk} t  \otimes \Fa_{kl}u$ to $ \Fa_{il}(s \otimes t \otimes u)$ and the coherence for $\Fa$ says that the two ways induce the same map. 

By the universal property of the colimit we get a unique map that makes every thing compatible: 
$$\gamma_1: m(i,j) \otimes m(j,k) \otimes m(k,l) \to m(i,l).$$
On the other hand we have two other maps in $\Hom[m(i,j) \otimes m(j,k) \otimes m(k,l), m(i,l)]$:
\begin{enumerate}
\item $\gamma_2: m(i,j) \otimes m(j,k) \otimes m(k,l) \xrightarrow{\varphi \otimes \Id} m(i,k) \otimes m(k,l) \xrightarrow{\varphi} m(i,l)$
\item $\gamma_3: m(i,j) \otimes m(j,k) \otimes m(k,l) \xrightarrow{ \Id \otimes \varphi} m(i,j) \otimes m(j,l) \xrightarrow{\varphi} m(i,l)$
\end{enumerate}

If we restrict these two maps to $\Fa_{ij}s \otimes \Fa_{jk} t  \otimes \Fa_{kl}u$, one gets the same compatible diagram; thus by uniqueness of the map out of the colimit we get that $\gamma_1= \gamma_2= \gamma_3$ and the coherence axiom follows. We leave the reader to check that the unit axiom holds also.
\end{proof}

We will make an abuse of notation and write $\mcatx$ be the category of semi $\M$-categories with fixed set of objects $X$. We endow $\mcatx$ with its canonical model structure (= projective). A direct consequence of the previous lemma is that:
\begin{cor}
\begin{enumerate}
\item We have a Quillen adjunction 
$$| |: \msxproj \leftrightarrows \mcatx: \iota$$
where $| |$ is left Quillen and $\iota$ is right Quillen.
\item We also have a Quillen adjunction:
$$| |: \msxex \leftrightarrows \mcatx: \iota$$ 
with $\iota$ still right Quillen.  
\end{enumerate}
\end{cor}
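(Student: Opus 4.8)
The plan is to exhibit both statements as instances of the single categorical adjunction coming from Lemma~\ref{lax-to-cat}, and then to verify the Quillen condition on the right adjoint $\iota$ alone — it suffices, by the usual criterion, that a right adjoint preserve fibrations and trivial fibrations. Under the identification of locally constant lax functors on $\sxop$ with semi $\M$-categories having object set $X$, the pullback $\varepsilon_{\ast}$ along the canonical $2$-functor $\varepsilon: \sxop \to \ol{X}$ is exactly the inclusion $\iota$, and its left adjoint $\varepsilon_{!}$ (the ``local colimit'' produced in Lemma~\ref{lax-to-cat}) is $| |$. Thus in both (1) and (2) the underlying adjunction of functors is already in hand; only compatibility with the model structures remains.

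For (1), I would first record that since $\sxop$ is entirely direct, the projective and Reedy structures coincide on $\kc$, so that a map $\sigma$ is a fibration (resp. trivial fibration) in $\msxproj$ precisely when each component $\Ub(\sigma)_{AB}$ is an objectwise fibration (resp. objectwise trivial fibration) in $\Hom[\sx(A,B)^{\tx{op}}, \ul{M}]$; no matching-object correction appears, because in the direct case the matching categories are empty and the matching objects terminal. On the other side, the canonical (projective) model structure on $\mcatx$ has $f: \Ca \to \D$ a fibration (resp. trivial fibration) iff each hom-object map $f_{AB}: \Ca(A,B) \to \D(A,B)$ is one in $\M$. Now $\iota f$ is the levelwise-constant diagram whose value at every object $z \in \sx(A,B)^{\tx{op}}$ is $f_{AB}$; hence $\Ub(\iota f)_{AB}$ is objectwise (trivially) fibrant exactly when $f_{AB}$ is. Therefore $\iota$ preserves fibrations and trivial fibrations, which gives the Quillen adjunction $| | \dashv \iota$ with $\iota$ right Quillen.

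For (2), I would avoid re-checking anything at the level of generators and instead compose. Theorem~\ref{thm-new-model} asserts that $\Id: \msxproj \to \msxex$ is right Quillen (old fibrations are new fibrations, and old weak equivalences lie in $\Wa$, so old trivial fibrations are new trivial fibrations). Composing the right Quillen functor $\iota: \mcatx \to \msxproj$ of part (1) with the right Quillen functor $\Id: \msxproj \to \msxex$ yields the right Quillen functor $\Id \circ \iota = \iota : \mcatx \to \msxex$, whose left adjoint is $| | \circ \Id = | |$. This is exactly the asserted Quillen adjunction, with $\iota$ still right Quillen.

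The only genuinely delicate point I expect is the bookkeeping in part (1): one must be sure that evaluating a constant diagram collapses the objectwise fibrancy condition to the single condition on $f_{AB}$, i.e. that the direct (matching-trivial) Reedy structure really does detect fibrancy of $\iota f$ through its single value, and that the chosen model structure on $\mcatx$ is indeed the componentwise one. Everything else is formal: the factorization through $\Id:\msxproj \to \msxex$ turns (2) into a composition of right Quillen functors and requires no further computation.
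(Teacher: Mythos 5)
Your proposal is correct and follows essentially the same route as the paper: the underlying adjunction comes from Lemma~\ref{lax-to-cat} via the identification of $\mcatx$ with locally constant lax functors, part (1) is verified by observing that (trivial) fibrations are levelwise in both $\msxproj$ and $\mcatx$ so that $\iota$ preserves them, and part (2) is deduced from Theorem~\ref{thm-new-model} (your composition of right Quillen functors is precisely what the paper's ``obvious'' step amounts to). The extra care you take about the direct/Reedy coincidence and the constant-diagram bookkeeping only makes explicit what the paper leaves implicit.
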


\begin{proof}
Indeed  $\mcatx$ is equivalent to $\cb_{\ast}\Lax[\sxop,\M]$. In both $\mcatx$ and $\msxproj$ (trivial) fibrations are level-wise and we get Assertion $(1)$. Assertion $(2)$ is obvious and follows from Theorem \ref{thm-new-model}.
\end{proof}

\begin{rmk}
From the lemma we know that for any co-Segal pre-category $\F$ there is a semi-$\M$-category $|\F|$ which is the adjoint transpose of $\F$. The co-unit of the adjunction if a transformation $\sigma: \F \to |\F|$ of lax-morphisms.
\end{rmk}
A natural question is to ask whether or not the canonical map $\sigma: \F \to |\F|$ a weak equivalence. If this map is a weak equivalence then we will say that that we have a (semi) strictification of $\F$. We treat this question in the next section.

%%%%%%%%%%%%%%%%%%%%%%% 

\section{Quasi-strictification}
 Given a co-Segal $\M$-category $\F$ a natural candidate to consider is $|\F|$ constructed previously. \ \\
The map $\sigma: \F \to |\F|$ will be a weak equivalence if and only if we can show that for every $(A,B)$ , the canonical map $\F(A,...,B) \to \colim \F_{AB}$ is a weak equivalence. This problem can be formulate in general as follow
\begin{quest}
Given a diagram $ \F: \J \to \M $ such that $\F(i) \to \F(j)$ is a weak equivalence for all morphism $i \to j$ of $\J$; is the canonical map $\F(i) \to \colim \F$ a weak equivalence ?
\end{quest}

The answer to that question is negative in general as illustrated in the following example.
\begin{ex} 
The coequalizer hereafter is not equivalent to all other objects:
\[
\xy
(0,0)*++{1}="X";
(20,0)*++{[0,1]}="Y";
(40,0)*++{S^1}="Z";
{\ar@<-0.5ex>@{->}_-{0}"X";"Y"};
{\ar@<0.5ex>@{->}^-{1}"X";"Y"};
{\ar@{->}^{p}"Y";"Z"};
\endxy
\]
\end{ex}
As we shall see in a moment there are some cases where we have an affirmative answer. More precisely we have:

\begin{prop}\label{prop_weak_equiv}
Let $\M$ be a model category and $\J$ be a Reedy  category,  with an initial object $e$.\\ 

Let  $\F: \J \to \M$ be a Reedy cofibrant diagram such that for every morphism $i \to j$ of $\J$, the map $\F(i) \to \F(j)$ is a weak equivalence. Then every canonical map $\F(i) \to \colim\F$ is a weak equivalence. 
\end{prop}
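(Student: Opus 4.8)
My plan is to reduce the statement to the single canonical map out of the initial object $e$, and then to compare $\F$ with the constant diagram on $\F(e)$. First I would exploit the initial object: for each $i$ the unique arrow $e \to i$ produces a map $\F(e)\to\F(i)$, which is a weak equivalence by hypothesis, and by the universal property of the colimit the canonical cocone makes the triangle $\F(e)\to\F(i)\to\colim\F$ commute, its composite being the canonical map $\F(e)\to\colim\F$. Hence, by the $2$-out-of-$3$ property, it suffices to prove that the single map $\F(e)\to\colim\F$ is a weak equivalence; the maps $\F(i)\to\colim\F$ for arbitrary $i$ then follow automatically.

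Next I would compare $\F$ with the constant diagram. The arrows $\F(e)\to\F(i)$, being natural in $i$, assemble into a morphism of $\J$-diagrams $\cb_{\ast}(\F(e))\to\F$ that is a levelwise weak equivalence. Since $\F$ is Reedy cofibrant, $\colim\F$ already computes the homotopy colimit (for the direct categories arising in the application the Reedy and projective structures coincide, and $\colim$ is left Quillen for the projective structure because $\cb_{\ast}$ preserves objectwise fibrations). The strategy is then to invoke the principle quoted in the introduction: a weak equivalence between cofibrant diagrams is carried by the left Quillen functor $\colim$ to a weak equivalence on colimits.

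The main obstacle, and the place where the hypotheses genuinely enter, is that $\cb_{\ast}(\F(e))$ is \emph{not} cofibrant in general—its latching maps are fold maps out of latching categories that may be disconnected—so it cannot be fed directly into the previous principle. I would therefore choose a cofibrant replacement $\tld{\F}\xrightarrow{\sim}\cb_{\ast}(\F(e))$; composing gives a weak equivalence $\tld{\F}\to\F$ between cofibrant diagrams, whence $\colim\tld{\F}\xrightarrow{\sim}\colim\F$. It then remains to identify $\colim\tld{\F}=\hocolim\cb_{\ast}(\F(e))$ with $\F(e)$, and this is exactly where the hypothesis that $\J$ has an initial object is used: the unique maps $e\to i$ constitute a natural transformation from the constant functor at $e$ to $\Id_{\J}$, so the nerve of $\J$ is contractible, and the homotopy colimit of a constant diagram over a category with contractible nerve recovers its value $\F(e)$.

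Finally, since the resulting comparison $\colim\tld{\F}\to\colim\cb_{\ast}(\F(e))=\F(e)$ is the canonical map, I conclude that $\F(e)\to\colim\F$ is a weak equivalence, which by the first paragraph finishes the proof. I expect the homotopy-colimit identification of the third paragraph—replacing the non-cofibrant constant diagram and computing its derived colimit via contractibility of the nerve—to be the crux; the reduction and the naturality assembling the comparison map are formal.
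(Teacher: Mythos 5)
Your opening reduction (via $2$-out-of-$3$, to the single map $\F(e)\to\colim\F$) and the comparison map $\cb_{\ast}(\F(e))\to\F$ are exactly the paper's first steps. The gap comes next: you invoke the principle that $\colim$ carries a levelwise weak equivalence between Reedy-cofibrant diagrams to a weak equivalence, justifying it only by the remark that ``for the direct categories arising in the application the Reedy and projective structures coincide.'' That justification does not prove the proposition as stated, which concerns an arbitrary Reedy category $\J$ with an initial object; and for an arbitrary Reedy category the principle is simply \emph{false}. Take $\J$ to be the span $b\leftarrow a\to c$ with $\deg(a)=1$, $\deg(b)=\deg(c)=0$, so both arrows lie in the inverse part: then Reedy cofibrant means objectwise cofibrant, and the objectwise weak equivalence from $(D^2\leftarrow S^1\to D^2)$ to $(\ast\leftarrow S^1\to\ast)$ has colimits $S^2$ and $\ast$. (This $\J$ has no initial object, which is the point: the initial-object hypothesis is what must make $\colim$ left Quillen for the Reedy structure, and your argument never uses it for that purpose --- you only use it for nerve contractibility.) The paper's proof supplies exactly the missing ingredient: an initial object forces every latching category of $\J$ to be empty or connected, i.e.\ $\J$ has \emph{cofibrant constants} in the sense of Hirschhorn (Def.\ 15.10.1), and for such Reedy categories $\colim$ is a left Quillen functor on the Reedy structure (Hirschhorn, Thm.\ 15.10.8).

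The same observation dissolves the ``main obstacle'' you identify and makes your detour unnecessary: the disconnected latching categories you worry about cannot occur when $\J$ has an initial object, so $\cb_{\ast}(\F(e))$ \emph{is} Reedy cofibrant (this is precisely what cofibrant constants gives, and is how the paper argues). Your replacement route also leans on two further unproven ingredients: the identification $\colim\tld{\F}\simeq\hocolim\cb_{\ast}(\F(e))$ again presupposes that $\colim$ is left Quillen for a structure in which $\tld{\F}$ is cofibrant (the very point at issue), and the statement that the homotopy colimit of a constant diagram over a category with contractible nerve recovers its value is not elementary in a bare model category $\M$ --- it requires simplicial or framing technology that the proposition's hypotheses do not provide. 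So the proof needs to be rerouted through the cofibrant-constants property rather than around it.
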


\begin{proof}
By $3$-for-$2$ it's enough to have that $\F(e) \to \colim \F$ is a weak equivalence.\\

As $\J$ has an initial object, then automatically $\J$ has cofibrant constant in the sense of \cite[Def. 15.10.1]{Hirsch-model-loc}. Now If $\F$ is Reedy cofibrant then necessarily $\F(e)$ is cofibrant in $\M$ since the latching category of $\J$ at $e$ is empty. It follows that the constant diagram $\cb_{\ast}(\F(e))$ is Reedy cofibrant.\\

Now as $e$ is initial, we have a canonical natural transformation $\eta: \cb_{\ast}(\F(e)) \to \F $ which is a point-wise weak equivalence of Reedy cofibrant diagrams. Consequently taking the colimit preserve weak equivalences, thus $\F(e) \to \colim \F$ is a weak equivalence in $\M$.  
\end{proof}

\subsubsection{Quasi-strictification}
\begin{df}

Say that a co-Segal category $\C: \sxop \to \M$ \textbf{has weak identities} if the semi-$\Ho(\M)$-category  
$$\C: \sxop \to \M \to \Ho(\M)$$
has identities.
\end{df}

\begin{thm}\label{quasi-strict}
Every \textbf{excellent} co-Segal $\M$-category with weak identities is \textbf{weakly co-Segal equivalent} to an $\M$-category with weak identities.
\end{thm}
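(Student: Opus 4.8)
The plan is to realize the desired equivalence through the canonical comparison map of the adjunction $| | \dashv \iota$. For a co-Segal category $\F$, write $\sigma_{\F} : \F \to |\F|$ for this map; over a pair $(A,B)$ and a $1$-morphism $z$ its component is the canonical arrow $\F(z) \to \colim \F_{AB}$, and recall from Lemma \ref{lax-to-cat} that $|\F|(A,B) = \colim \F_{AB}$. By the description of $\Wa$ in Theorem \ref{thm-new-model}, a map is a weak co-Segal equivalence precisely when, for every $A \neq B$, its image under $\Ub$ is a level-wise weak equivalence; so the theorem reduces to making the arrows $\F(z) \to \colim \F_{AB}$ into weak equivalences for $A \neq B$. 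Since $\F$ is only assumed \emph{excellent}, I would not attack $\sigma_{\F}$ directly: instead fix, by Definition \ref{excellent-lax-diag}, a weak equivalence $u : \F \xrightarrow{\sim} \G$ with $\G$ a $\Ub$-cofibrant lax diagram, so that each $\G_{AB}$ is Reedy cofibrant in $\Hom[\sx(A,B)^{\tx{op}}, \ul{M}]$, and carry out the argument on $\G$. Note that $u$, being an old (Reedy) weak equivalence, already lies in $\Wa$.

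Next I would transport to $\G$ the two structural features of $\F$ that the argument consumes. As $u$ is a level-wise weak equivalence and each $\F_{AB}$ sends every morphism of $\sx(A,B)^{\tx{op}}$ to a weak equivalence (this is the co-Segal condition), the two-out-of-three property forces each $\G_{AB}$ to send every morphism to a weak equivalence as well. Similarly $u$ induces a level-wise isomorphism of semi-$\Ho(\M)$-categories $\Ho(\M)\F \cong \Ho(\M)\G$, so $\G$ again has weak identities.

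The heart of the proof is then one application of Proposition \ref{prop_weak_equiv} to each diagram $\G_{AB} : \sx(A,B)^{\tx{op}} \to \M$. The indexing category is a direct Reedy category with an initial object, namely the basic length-one sequence $(A,B)$, out of which the co-Segal structure maps run toward the longer sequences; and $\G_{AB}$ is Reedy cofibrant while sending every morphism to a weak equivalence. Proposition \ref{prop_weak_equiv} therefore makes every canonical map $\G(z) \to \colim \G_{AB}$ a weak equivalence, for all $z$ and all $(A,B)$, so in particular $\sigma_{\G}$ lies in $\Wa$. Here $|\G|$ is an honest $\M$-category: the colimits assemble component-wise into a strictly associative composition, exactly as in Lemma \ref{lax-to-cat}, using that colimits distribute over $\otimes$. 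Transporting weak identities along the level-wise weak equivalence $\sigma_{\G}$ (again via $\Ho(\M)\G \cong \Ho(\M)|\G|$) shows $|\G|$ has weak identities. Finally, because $\Wa$ is closed under composition, the composite $\F \xrightarrow{u} \G \xrightarrow{\sigma_{\G}} |\G|$ is a weak co-Segal equivalence onto an $\M$-category with weak identities, as required.

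The point I expect to demand the most care is not a computation but the handling of units, which is also the reason the conclusion offers only \emph{weak} identities. The functor $| |$ strictifies composition — associativity is read straight off the colimits — yet leaves the unit untouched: the identity at an object $A$ persists only as a weak identity in $\Ho(\M)$, inherited from the co-Segal weak unit living over the diagonal $\sx(A,A)^{\tx{op}}$. Verifying cleanly that $|\G|$ is a genuine semi-$\M$-category (that the universal maps out of the colimits are mutually coherent and strictly associative) is the delicate but routine step. A secondary subtlety, already built into the plan, is that I run everything through the witness $\G$ rather than $\F$: this is precisely what avoids having to know that $| |$ preserves the weak equivalence $u$ — which would require $\F$ itself to be $\Ub$-cofibrant — since I only ever use that $\Wa$ is closed under composition.
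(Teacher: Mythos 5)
Your proof is correct and is essentially the paper's own argument: use excellence to pass to a $\Ub$-cofibrant diagram $\G$ (the paper phrases this as a WLOG reduction, invoking that the co-Segal and unitality conditions are stable under weak equivalence, where you instead keep the witness $u:\F\to\G$ explicit and compose at the end), then apply Lemma \ref{lax-to-cat} and Proposition \ref{prop_weak_equiv} component-wise to conclude that $\sigma_{\G}:\G \to |\G|$ is a level-wise weak equivalence, and finally transport the weak identities along it. The only slip is one of framing: the equivalence asserted in Theorem \ref{quasi-strict} is a weak equivalence in $\msxproj$ (level-wise for \emph{all} pairs $(A,B)$, not merely $A\neq B$ as in the class $\Wa$ of Theorem \ref{thm-new-model}, which is what the weaker Theorem \ref{weak-strict} uses); since you in fact prove the canonical maps are weak equivalences for all $(A,B)$, your argument establishes the intended, stronger conclusion anyway.
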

The proof of the theorem is a direct application of Proposition \ref{prop_weak_equiv}. 

\begin{proof}[Proof of Theorem \ref{quasi-strict}]
Let $\F: \sxop \to \M$ be a unital co-Segal category. Since being unital is stable under weak equivalence we can assume that $\F$ is $\Ub$-cofibrant.
As $\Ub(\F)$ is projective cofibrant in $\prod_{(A,B) \in X^2} \Hom(\sx(A,B)^{op}, \M)$, this means that each component $\F_{AB} \in \Hom(\sx(A,B)^{op}, \M)_{proj}$ is projective cofibrant. Being projective cofibrant allows computing the homotopy colimit of $\F_{AB}$ as the usual colimit of $\F_{AB}$.\ \\

By Lemma \ref{lax-to-cat} we get a semi-$\M$-category $|\F|$ by declaring $|\F|(A,B):= \colim \F_{AB}$. $|\F|$ is a locally constant object of $\msx$ equipped with a canonical map $\sigma: \F \to |\F|$ in $\msx$.  Thanks to Proposition \ref{prop_weak_equiv}, all canonical maps $\Fa(A...,B) \to \colim \F_{AB}$ are weak equivalences; these maps are exactly the components of $\sigma: \F \to |\F|$ which means that $\sigma$ is a weak equivalence in $\msx$. \ \\
\ \\
$|\F|$ is a strict semi-category with a strict composition; it inherits of the (weak) unities of $\F$ since $\sigma$ is a weak equivalence and the theorem follows.
\end{proof}
We have an immediate consequence.
\begin{cor}
Let $\msx_{\Ub\tx{-cof}}\hookrightarrow \msx$ be the full subcategory of $\Ub$-cofibrant co-Segal categories.  Then if all objects of $\M$ are cofibrant, the restrict adjunction 
$$| |: \msx_{\Ub\tx{-cof}} \leftrightarrows \mcatx: \iota$$
induces an equivalence between the respective homotopy categories. 
\end{cor}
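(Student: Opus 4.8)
The plan is to leverage the Quillen adjunction $| | : \msxex \leftrightarrows \mcatx : \iota$ from the corollary preceding the Quasi-strictification section, and to show that after restricting the left-hand side to the full subcategory $\msx_{\Ub\tx{-cof}}$ of $\Ub$-cofibrant objects, the derived functors become mutually inverse equivalences of homotopy categories. The essential input is already contained in Theorem \ref{quasi-strict} and its proof: for a $\Ub$-cofibrant $\F$, the canonical map $\sigma : \F \to |\F|$ is a weak equivalence in $\msx$, because each component $\F_{AB}$ is projective cofibrant and so Proposition \ref{prop_weak_equiv} applies to identify $|\F|(A,B) = \colim \F_{AB}$ with the homotopy colimit, through which $\F(A,\dots,B) \to \colim \F_{AB}$ is a weak equivalence. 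Thus the unit of the restricted adjunction is already a levelwise weak equivalence on $\Ub$-cofibrant objects, which is exactly the condition one needs.

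First I would check that the restriction is well-defined and that the relevant derived functors exist. Every object of $\msx_{\Ub\tx{-cof}}$ is cofibrant (indeed $\Ub$-cofibrant implies cofibrant in the Reedy/projective structure), so on this subcategory the left adjoint $| |$ already computes its own left derived functor without further cofibrant replacement; the hypothesis that all objects of $\M$ are cofibrant is what guarantees (via the earlier propositions on $\Ub$-cofibrancy and excellence) that $\Ub$-cofibrant objects are plentiful and closed under the constructions used. On the $\mcatx$ side, the right adjoint $\iota$ lands in locally constant lax diagrams, which are excellent by the last proposition of Section 2; I would verify that $\iota(\G)$ is weakly equivalent to a $\Ub$-cofibrant object so that it defines an object of $\Ho(\msx_{\Ub\tx{-cof}})$, or equivalently work at the level of homotopy categories where this replacement is harmless.

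Next I would establish the two triangle-type statements at the level of homotopy categories. For the unit: given $\Ub$-cofibrant $\F$, the map $\F \to \iota|\F|$ induced by $\sigma$ is a weak equivalence in $\msx$ by Theorem \ref{quasi-strict}, so it becomes an isomorphism in $\Ho(\msx_{\Ub\tx{-cof}})$. For the counit: given a semi-$\M$-category $\G$, I would show the canonical map $|\iota \G| \to \G$ is a weak equivalence in $\mcatx$; since $\iota\G$ is the locally constant lax diagram with value $\G(A,B)$ on each hom and $| \cdot |$ recovers the colimit of a constant diagram, this map is essentially an identity (the colimit of a constant diagram over a connected category with initial object is the constant value), so it is even an isomorphism. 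Together these show the derived unit and counit are isomorphisms, hence $\Ho(| |)$ and $\Ho(\iota)$ are mutually inverse equivalences.

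The main obstacle I expect is bookkeeping on the $\iota$ side: one must confirm that $\iota$, viewed as a functor into the \emph{restricted} subcategory $\msx_{\Ub\tx{-cof}}$, is the correct right adjoint up to the homotopy category, since $\iota(\G)$ need not be literally $\Ub$-cofibrant even though it is excellent and locally constant. The cleanest way to handle this is to phrase the equivalence purely in terms of $\Ho$: use that the localization functor $\msx \to \Ho(\msx_{\Ub\tx{-cof}})$ inverts the weak equivalences, that every object of $\msx_{\Ub\tx{-cof}}$ maps via $\sigma$ to the image of $\mcatx$, and that the weak equivalences of $\msx$ between $\Ub$-cofibrant objects correspond under $| |$ to weak equivalences of $\mcatx$ (levelwise on hom-objects). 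This last correspondence, which makes $| |$ reflect and preserve weak equivalences on the subcategory, is the technical heart; it follows from the commuting square relating $\sigma$ on source and target to the levelwise weak equivalences $\F(A,\dots,B)\to\colim\F_{AB}$ together with $3$-for-$2$.
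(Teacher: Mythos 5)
Your proposal is correct and is essentially the paper's own argument: both rest on Theorem \ref{quasi-strict} (the unit $\sigma\colon \F \to |\F|$ is a weak equivalence whenever $\F$ is $\Ub$-cofibrant), on the identity $| |\circ \iota = \Id$ (your observation that the counit is an isomorphism, the colimit of a constant diagram being its value), and on the fact that both functors preserve weak equivalences, after which the conclusion is a standard localization argument. Your extra bookkeeping about whether $\iota\G$ lies in $\msx_{\Ub\tx{-cof}}$ is exactly what the hypothesis that all objects of $\M$ are cofibrant settles, as in the paper's earlier proposition that every $\M$-category is excellent (constant diagrams being images of the left Quillen functor $\Fb^{E}$).
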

\begin{proof}[Proof of the corollary]
Let $\F$ be an $\Ub$-cofibrant co-Segal category and $\Aa$ be a category. Given a morphism $\sigma: \F \to \Aa$ in $\msx$, then by adjunction we can factorize that map as:
$$ \F \to |\F| \xrightarrow{\sigma'} \Aa .$$

From the proof of the theorem we know that the canonical map $ \F \to |\F| $ is always a weak equivalence if $\F$ is an $\Ub$-cofibrant co-Segal category. Then by $3$-for-$2$, we get that $\sigma$ is a weak equivalence in $\msx$ if and only if $\sigma': |\F| \to \Aa$ is a weak equivalence in $\mcatx$.

Moreover both functors $\iota$ and $||$ preserve weak equivalences and $|| \circ \iota= \Id$.  
The rest is just a categorical argument on localization of categories. 
\end{proof}

The next move is to go from quasi-strictification to strictification. This is a general issue and will be discussed in full generality in a different work.

The previous theorem has a weaker version using the model structure $\msxex$. 

\begin{thm}\label{weak-strict}
 Every co-Segal category in $\msxex$ is equivalent to a strict one.   
\end{thm}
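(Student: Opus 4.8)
The plan is to run the argument of Theorem \ref{quasi-strict} again, but to exploit the fact that a weak equivalence in $\msxex$ — being a map in the class $\Wa$ — only constrains the off-diagonal components $\Ub(-)_{AB}$ with $A\neq B$. This is exactly the range in which the obstruction to $\Ub$-cofibrancy disappears: by the proposition giving $\Ub(\Gamma\delta_{AB}\G)\cong\delta_{AB}(\G)$ for $A\neq B$ and its corollary, every generating cofibration in $\I_{ex}$ is already a $\Ub$-cofibration, and the tensor summands responsible for the loss of projectiveness (see the subsection on the obstruction of excellence) occur only on the diagonal. So, unlike Theorem \ref{quasi-strict}, I would not need the hypothesis that all objects of $\M$ are cofibrant, nor the assumption that $\F$ is excellent.

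First I would take an arbitrary co-Segal category $\F\in\msxex$ and choose a cofibrant replacement, i.e. a factorization $0\to \G \xrightarrow{\sim} \F$ in $\msxex$ (Theorem \ref{thm-new-model}) with $0\to\G$ a cofibration and $\G\xrightarrow{\sim}\F$ a trivial fibration. Since the generating cofibrations $\I_{ex}$ are all of the form $\Gamma[\delta_{AB}(\alpha)]$ with $A\neq B$, and $\Ub$ collapses the tensor summands of such cells, a cell-by-cell analysis of the pushouts — exactly the computation sketched before Lemma \ref{lem-generation}, now performed only off-diagonal where no closure of cofibrations under $\otimes$ is required — shows that $0\to\G$ is an off-diagonal $\Ub$-cofibration. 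Concretely, for every pair $A\neq B$ the component $\G_{AB}$ is projective ($=$ Reedy, since $\sx(A,B)^{\tx{op}}$ is direct) cofibrant in $\Hom(\sx(A,B)^{\tx{op}},\ul M)$.

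Next I would observe that $\G$ remains co-Segal off-diagonal: as $\G\xrightarrow{\sim}\F$ is a level-wise weak equivalence on each $(A,B)$ with $A\neq B$ and $\F$ is co-Segal, two-out-of-three forces every structure map of the diagram $\G_{AB}$ to be a weak equivalence. For each $A\neq B$ I can then invoke Proposition \ref{prop_weak_equiv}, applied to the direct Reedy category $\sx(A,B)^{\tx{op}}$ with initial object $(A,B)$, to conclude that the canonical map $\G(A,\dots,B)\to\colim\G_{AB}$ is a weak equivalence. Forming $|\G|$ via Lemma \ref{lax-to-cat} with $|\G|(A,B):=\colim\G_{AB}$ produces a strict (semi) $\M$-category, and the canonical co-unit $\tau:\G\to|\G|$ has all its off-diagonal components weak equivalences, hence $\tau\in\Wa$. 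Composing, the zig-zag $\F\xleftarrow{\sim}\G\xrightarrow{\tau}|\G|$ exhibits $\F$ as equivalent in $\msxex$ to the strict $\M$-category $|\G|$, which inherits the weak units of $\F$ exactly as in the proof of Theorem \ref{quasi-strict}.

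The step I expect to be the main obstacle is the second one: verifying that an $\msxex$-cofibrant object is projective cofibrant on its off-diagonal components. This requires revisiting the pushout computation for $\Ub$ and checking that, because $\Ub(\Gamma\delta_{AB}(-))\cong\delta_{AB}(-)$ for $A\neq B$, attaching an $\I_{ex}$-cell alters $\Ub(\G)$ off-diagonal only through a plain projective-cofibration pushout, with none of the tensor products that destroy the property on the diagonal. Closure under transfinite composition and retracts then gives the claim, and — crucially — sidesteps the ``all objects cofibrant'' hypothesis on which the strictification theorem of the previous section depends.
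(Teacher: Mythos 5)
Your overall strategy is the one the paper uses: replace $\F$ by a cofibrant object of $\msxex$, observe that cofibrancy there gives projective cofibrant off-diagonal components while the partial co-Segal conditions survive by two-out-of-three, apply Proposition \ref{prop_weak_equiv} componentwise, and conclude that the map to the strictification lies in $\Wa$. Your zig-zag $\F \xleftarrow{\sim} \G \xrightarrow{\tau} |\G|$ is the paper's ``$\F \to |\F|$ after cofibrant replacement'' in slightly different clothing, and your two-out-of-three step makes explicit what the paper only names (``partial co-Segal conditions'').

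There is, however, a genuine flaw in your justification of the key cofibrancy step, and with it your claim to sidestep the ``all objects of $\M$ are cofibrant'' hypothesis. It is not true that the tensor summands responsible for the failure of $\Ub$-cofibrancy occur only on the diagonal. The isomorphism $\Ub(\Gamma\delta_{AB}\G) \cong \delta_{AB}(\G)$ holds because $\delta_{AB}(\G)$ vanishes outside $\C(A,B)$ and two morphisms of $\C(A,B)$ are never composable when $A \neq B$; but this collapse applies only to the free object on a single Dirac mass, i.e.\ essentially to the first cell attached to the initial object. As soon as a cell $\Gamma[\delta_{AB}(\alpha)]$, $\alpha : \Aa \to \Ba$, is attached to a diagram $\G$ that is already nonzero elsewhere, the pushout creates new summands at \emph{off-diagonal} components: for $z = s_1 \otimes s_2 \in \C(A,C)$ with $s_1 \in \C(A,B)$ and $s_2 \in \C(B,C)$, the value at $z$ acquires a summand of the form $\Ba(s_1) \otimes \G(s_2)$, and factorizations such as $(A,B)\otimes(B,A)\otimes(A,B)$ even feed back into $\C(A,B)$ itself. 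Hence, off the diagonal, the attachment map is a pushout along maps of the form $\alpha_{s_1} \otimes \Id_{\G(s_2)}$, and to know these are projective cofibrations one needs the values $\G(s_2)$ to be cofibrant (pushout-product axiom) --- exactly the closure-of-cofibrations-under-$\otimes$ issue you claim disappears off-diagonal, and exactly where the ``all objects cofibrant'' hypothesis enters the paper's earlier proposition on $\Ub$-cofibrations. The paper's own proof simply asserts that an $\msxex$-cofibrant object has projective cofibrant off-diagonal components; your cell-by-cell analysis, as stated, does not prove this, so the advertised strengthening (dropping all cofibrancy hypotheses on $\M$) is not established.
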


\begin{proof}
Let  $\F$ be a co-Segal category. Then up to a cofibrant replacement we can assume that $\F$ is cofibrant. Note that such cofibrant replacement is only, a priori, partially co-Segal. 
By definition of the model structure on $\msxex$, since $\F$ is cofibrant then for $A \neq B$ the functor $$\F_{AB}: \sx(A,B)^{op} \to  \ul{M}$$ is projective cofibrant and take its values in the subcategory of weak equivalences (partial co-Segal conditions). 

Then from Proposition \ref{prop_weak_equiv} we get that for $A\neq B$
all canonical map $$\F(A,...B) \to |\F|(A,B)$$ are weak equivalences. This means that $\F \to |\F|$ is a weak equivalence in $\msxex$.
\end{proof}

\section{Commutative co-Segal monoids}
\subsection{Preliminaries} Following Leinster \cite{Lei2}, we will denote by $\Phi$ the skeletal category of finite sets: its objects are finite sets $n=\{ 0,...,n-1\}$ for each integer $n \geq 0$; and its morphisms are all functions. $\Phi$ has a monoidal structure given by disjoint union, which is a symmetric operation. So we have a symmetric monoidal category $(\Phi,+,0)$, where $+$ is the disjoint union and $0$ is the empty set. \ \\

Let $\Gamma$ be the category considered by Segal in  \cite{Seg1}. The objects of $\Gamma$ are all finite sets, and a morphism from $S$ to $T$ is a morphism from $S$ to $P(T)$, the set of subsets of $T$.\ \\

Leinster \cite[Prop 3.1.1]{Lei2} pointed out a relationship between $\Phi$ and $\Gamma$ in the following proposition.
\begin{prop}
Let $\M= (\ul{M}, \times, 1)$ be a category with finite product.  Then there is an isomorphism of categories:

$$\scolax[(\Phi,+,0), (\ul{M}, \times, 1)]  \cong[ \Gamma^{op}, \ul{M}].$$  
\end{prop}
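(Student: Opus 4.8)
The plan is to identify $\Gamma^{op}$ with the category $\mathbf{Fin}_{*}$ of finite pointed sets and then to match, generator by generator, the data of a symmetric colax monoidal functor with that of a functor on $\mathbf{Fin}_{*}$. First I would recall that Segal's $\Gamma$ is isomorphic to $(\mathbf{Fin}_{*})^{op}$: a morphism $S \to T$ (a map $S \to P(T)$ with pairwise disjoint values) is the same as a partial function $T \rightharpoonup S$, and $\Gamma$-composition is composition of partial functions. Hence $[\Gamma^{op},\ul{M}] \cong [\mathbf{Fin}_{*},\ul{M}]$, and under this identification the total functions realize $\Phi=\mathbf{Fin}$ as the wide subcategory $\Phi \hookrightarrow \mathbf{Fin}_{*}=\Gamma^{op}$ (add a disjoint basepoint). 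The key structural fact I would isolate is that every pointed map factors as a total (``active'') map preceded by an ``inert'' projection $m+n \to m$ collapsing the last $n$ coordinates onto the basepoint, and that these two classes generate $\mathbf{Fin}_{*}$ subject to the evident commutation (pullback) squares.

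Then I would construct two mutually inverse functors. In one direction I send a symmetric colax monoidal functor $(F,\delta,\epsilon)$ to the functor $G$ with $G(n):=F(n)$, sending a total map $f$ to $F(f)$ and the inert projection $m+n\to m$ to the component $F(m+n)\to F(m)$ of $\delta_{m,n}$ (more generally, by coassociativity and symmetry the components of $\delta$ determine a projection $F(A)\to F(A')$ for every subset $A'\subseteq A$, i.e. for every inert map). In the other direction I restrict a functor $G\colon\Gamma^{op}\to\ul{M}$ along $\Phi\hookrightarrow\Gamma^{op}$ to get $F:=G|_{\Phi}$, define $\delta_{m,n}\colon F(m+n)\to F(m)\times F(n)$ as the map into the product whose two components are the images under $G$ of the two inert projections, and take $\epsilon\colon F(0)\to 1$ to be the unique map to the terminal object. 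Because $\ul{M}$ is cartesian, a colax structure map is the same thing as its pair of projections, so these assignments are visibly inverse on objects and on the generating morphisms. Extending to morphisms is then immediate: the monoidal-naturality square of a transformation of colax functors is exactly naturality against the inert generators, while ordinary naturality on $\Phi$ supplies it against the active generators, so monoidal natural transformations on one side correspond to ordinary natural transformations on the other. As both constructions are the identity on underlying objects, this yields a genuine isomorphism of categories rather than a mere equivalence.

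The hard part, and the step I expect to be the real obstacle, is verifying that these assignments are well defined and functorial across the inert/active factorization, i.e. that $G$ respects composition of arbitrary pointed maps. Here I would reduce each colax coherence to a statement about generators: applying the universal property of the products in $\ul{M}$, coassociativity and the unit axiom for $\delta$ become identities among the inert projections in $\Gamma^{op}$ (which hold since those projections compose correctly), while naturality of $\delta$ and the compatibility of inert with active maps become the pullback squares in $\mathbf{Fin}_{*}$. The cartesian hypothesis on $\ul{M}$ is used in an essential way precisely at this reduction: it is what lets me check equality of two maps into a product by comparing their projections, and it is what makes $\epsilon$ (a map to the terminal object) and the symmetry compatibility carry no extra data. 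Finally, the ``symmetric'' clause in $\scolax$ matches functoriality of $G$ over the permutation automorphisms of each $n$, which is automatic since permutations are total maps already recorded by $F$.
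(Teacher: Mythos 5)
The paper gives no proof of this proposition at all: it is quoted from Leinster (the cited Prop.\ 3.1.1), so there is no internal argument to compare yours against. Your reconstruction is correct and is, in substance, the standard proof of that result: view the morphisms of $\Gamma^{op}$ as partial functions, embed $\Phi$ as the total (active) maps, generate the inert maps by the projections $m+n \to m$, match the colax data $(\delta,\epsilon)$ against the inert generators, and use cartesianness of $\ul{M}$ to reduce each colax coherence axiom (coassociativity, counit, naturality, symmetry) to relations among generators in $\Gamma^{op}$. You also correctly isolate where the hypotheses do real work: products let you test maps into $F(m)\times F(n)$ componentwise, and the symmetry axiom is exactly what makes the projection $F(A)\to F(A')$ well defined for an arbitrary subset $A'\subseteq A$ of a skeletal object rather than only for initial segments.

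One point needs tightening, because it touches your final claim of a genuine isomorphism. The identification of $\Gamma$ with the opposite of finite pointed sets is an equivalence, not an isomorphism (adding a disjoint basepoint is not bijective on objects), so the step ``$[\Gamma^{op},\ul{M}] \cong [\mathbf{Fin}_*,\ul{M}]$'' as you state it only yields an equivalence; note also that the paper's own $\Phi$ is skeletal while its $\Gamma$ has all finite sets as objects, so as literally stated the two sides can in any case only be compared up to equivalence. To get an honest isomorphism, either read $\Gamma$ skeletally (same objects $n$ as $\Phi$, which is the setting of the cited result) and run your inert/active analysis directly on $\Gamma^{op}$, whose morphisms are on-the-nose partial functions with no detour through pointed sets, or weaken the conclusion to an equivalence. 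Finally, observe that you silently corrected the paper: Segal's morphisms $\theta\colon S \to P(T)$ must have pairwise disjoint values, a condition the paper's recollection of $\Gamma$ drops; without it $\Gamma^{op}$ becomes the category of finite sets and relations and the proposition is false, so keep that hypothesis explicit in your write-up.
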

Here `$\scolax$' stands for symmetric colax monoidal functors. Following the above result and the Segal formalism, Leinster considered weak commutative algebra (or monoid) in a symmetric monoidal  category $\M=(\ul{M},\otimes,I)$ having a subcategory of weak equivalence $\W$ which satisfies certain properties; we called in \cite{SEC1}, the pair $(\M,\W)$, a \emph{base of enrichment}. The following definition is due to Leinster. 

\begin{df}
Let $\M=(\ul{M},\otimes,I)$ be a symmetric monoidal category with a subcategory $\W$ such that the pair $(\M,\W)$ is a base of enrichment.\ \\

A \textbf{homotopy commutative monoid} in $\M$ is a symmetric colax monoidal functor:
$$ \C: (\Phi,+,0) \to \M $$
satisfying the Segal conditions:
\begin{enumerate}
\item for every $m,n \in \Phi$ the colaxity map $\C(n+m) \to \C(n) \otimes \C(m)$ is a weak equivalence;
\item the map $\C(0) \to I$ is a weak equivalence.  
\end{enumerate}  
\end{df}

\subsection{The co-Segal formalism} 
Colax diagrams are difficult to manipulate for a homotopical and categorical point of view. For example computing limits in the category $\scolax[(\Phi,+,0), (\ul{M}, \otimes, 1)]$ is not straightforward !\ \\

For this reason we will change colax to lax using the co-Segal formalism. Let $\phepi$ be the subcategory of $\Phi$ having the same objects but only morphisms which are surjective. $\phepi$ is the `symmetric' companion of $\Depi$. It's easy to see that that we a symmetric monoidal subcategory $(\phepi,+,0) \subset (\Phi,+,0).$ We have an obvious (nonsymmetric) monoidal functor 
$$i: (\Depiop,+,0) \to (\phepiop,+,0).$$

\begin{df}
Let $\M=(\ul{M},\otimes,I)$ be a symmetric monoidal category with a subcategory $\W$ such that the pair $(\M,\W)$ is a base of enrichment.\ \\

A  \textbf{commutative co-Segal semi-monoid} in $\M$ is a normal \textbf{symmetric lax monoidal} functor:
$$ \C: (\phepiop,+,0) \to \M $$
such that  for every map $f:n \to m$ of  $\phepi$,  the structure map $$ \C(m) \to \C(n)$$
is a weak equivalence. \ \\

A \textbf{ commutative co-Segal monoid} is a commutative co-Segal semi-monoid $\C$ such that the induced diagram
$$i^{\star}\C: (\Depiop,+,0) \to \M$$
is a co-Segal monoid.
\end{df}

If $\C$ is a commutative co-Segal monoid, then as in the noncommutative case the monoid structure is on the object $\C(1)$. The commutative quasi-multiplication is obtained as before.\\

A direct consequence of the result of the previous section is:
\begin{thm}\label{quasi-strict-com}
Every  commutative excellent co-Segal monoid is \textbf{weakly co-Segal equivalent} to a commutative one which is strictly associative.
\end{thm}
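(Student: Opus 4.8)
The plan is to transport the argument of Theorem \ref{quasi-strict} into the symmetric monoidal world, replacing the indexing $2$-category $\sxop$ by $(\phepiop,+,0)$ and the locally constant lax functors (= semi $\M$-categories) by commutative monoids. First I would establish the commutative analogue of the adjunction Lemma \ref{lax-to-cat}: a \emph{locally constant} symmetric lax monoidal functor $(\phepiop,+,0) \to \M$ is exactly the datum of an ordinary commutative monoid in $\M$, and the inclusion of these into all symmetric lax monoidal functors $\C$ admits a left adjoint $\C \mapsto |\C|$ computed by a colimit. Concretely I would take the underlying object of $|\C|$ to be $\colim \C$ and produce the multiplication exactly as in the sketch of Lemma \ref{lax-to-cat}: the laxity maps $\C(n)\otimes \C(m) \to \C(n+m)$ assemble, after passing to colimits (which distribute over $\otimes$ since $\M$ is monoidal closed), into a single map $|\C| \otimes |\C| \to |\C|$, and the uniqueness clause in the universal property of the colimit forces associativity and the unit law to hold on the nose.

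The commutativity is the genuinely new ingredient. Since $\C$ is \emph{symmetric} lax monoidal, its laxity data are compatible with the symmetry of $(\phepi,+,0)$ and with the braiding of $\M$; restricting the two candidate composites $|\C|\otimes|\C| \to |\C|$ — the multiplication and its precomposition with the braiding — to each summand $\C(n)\otimes\C(m)$ of the colimit, they agree by exactly this compatibility. Invoking once more the uniqueness of maps out of a colimit, I would conclude that the multiplication on $|\C|$ is strictly commutative, so that $|\C|$ is an ordinary (strictly associative and commutative) monoid, equipped with a canonical comparison $\sigma : \C \to |\C|$, the unit of the adjunction.

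It then remains to check that $\sigma$ is a weak co-Segal equivalence, and here the reasoning is identical to Theorem \ref{quasi-strict}. Because $\C$ is excellent and being a co-Segal monoid is stable under weak equivalence, I may assume $\C$ is $\Ub$-cofibrant, so that its underlying diagram is projective ($=$ Reedy) cofibrant and the homotopy colimit agrees with the colimit. The object $0$ is isolated in $\phepi$ (it admits no nonidentity surjection to or from it), so on the component of nonempty sets the object $1$ is initial in $\phepiop$, while the co-Segal condition says precisely that each structure map $\C(m) \to \C(n)$ attached to a surjection $n \to m$ is a weak equivalence. Thus the diagram $\C : \phepiop \to \ul{M}$ (restricted to that component) satisfies the hypotheses of Proposition \ref{prop_weak_equiv}, which yields that the canonical map $\C(1) \to \colim \C$ is a weak equivalence. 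This is the comparison $\sigma$ on the underlying object $\C(1)$, so $\sigma : \C \to |\C|$ is a weak co-Segal equivalence, and the theorem follows.

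The hard part will be the bookkeeping of the symmetric structure: one must verify that the braiding of $(\phepi,+,0)$ interacts with the laxity data so that it really \emph{descends} through the colimit to give strict commutativity, rather than commutativity up to a possibly nontrivial coherence isomorphism — this is where the normality of the symmetric lax monoidal functor and the coherence pentagon/hexagon have to be used carefully. A secondary subtlety, which I would flag rather than resolve, is the unit: exactly as in the noncommutative case only the multiplication is being strictified, the condition $\C(0) \to I$ being retained merely up to weak equivalence, so one must isolate the object $0$ from the colimit computation and phrase the strictification for the multiplication alone.
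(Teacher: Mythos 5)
Your overall architecture is the right one, and it supplies exactly what the paper leaves implicit: the paper offers no actual proof of this theorem (it is stated as ``a direct consequence of the result of the previous section''), and the intended argument is indeed the symmetric version of Lemma \ref{lax-to-cat} (strict associativity \emph{and} commutativity of the colimit multiplication forced by uniqueness of maps out of a colimit, using the compatibility of the laxity maps with the symmetries of $\phepi$ and the braiding of $\M$), together with the reduction to a $\Ub$-cofibrant representative and the weak-equivalence argument of Theorem \ref{quasi-strict}. Your bookkeeping of the isolated object $0$ and of the unit is also correct.

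There is, however, one genuine gap: the step where you claim that $\C$ restricted to the nonempty component of $\phepiop$ ``satisfies the hypotheses of Proposition \ref{prop_weak_equiv}''. That proposition requires the indexing category to be a Reedy category, and $\phepiop$ is not one: $\phepi$ contains \emph{all} surjections, in particular all bijections, so every object $n$ carries the symmetric group $\Sigma_n$ as automorphisms, whereas a Reedy category admits no non-identity isomorphisms ($\phepiop$ is only a \emph{generalized} Reedy category in the sense of Berger--Moerdijk). For the same reason your parenthetical ``projective ($=$ Reedy)'' fails here; that identification holds for direct categories such as $\sxop$ and $\Depiop$, not for $\phepiop$. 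Nor can you dodge the problem by restricting along $i: \Depiop \to \phepiop$ and invoking Proposition \ref{prop_weak_equiv} over $\Depiop$: the functor $i$ is not final (already for $n=2$ the comma category $n/i$ is disconnected, with components indexed by the ``reduced words'' of surjections onto $n$), so $\colim_{\Depiop} i^{\star}\C$ need not agree with $\colim_{\phepiop}\C$, and it is the latter that carries the commutative multiplication. The repair is to prove a projective analogue of Proposition \ref{prop_weak_equiv}: if $\J$ is any connected category with an initial object $e$ and $\F: \J \to \M$ is projective cofibrant with all structure maps weak equivalences, then $\F(e) \to \colim \F$ is a weak equivalence. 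The paper's proof transcribes almost verbatim: since $e$ is initial, $\cb_{\ast}$ is the left Kan extension along $\{e\} \hookrightarrow \J$ and hence left Quillen for the projective structure, so $\cb_{\ast}\F(e) \to \F$ is a pointwise weak equivalence of projective cofibrant diagrams, and $\colim$ (left Quillen projectively) sends it to a weak equivalence by Ken Brown's lemma, with $\colim \cb_{\ast}\F(e) \cong \F(e)$ by connectedness. Since what your $\Ub$-cofibrancy hypothesis actually provides is projective cofibrancy, this corrected statement is the one your argument feeds into, and with that substitution your proof goes through.
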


\appendix

\bibliographystyle{plain}
\bibliography{Bibliography_These}

\begin{thebibliography}{10}

\bibitem{SEC1}
H.~V. {Bacard}.
\newblock {Segal Enriched Categories I}.
\newblock http://arxiv.org/abs/1009.3673.

\bibitem{COSEC1}
H.~V. {Bacard}.
\newblock {Lax Diagrams and Enrichment}.
\newblock June 2012.
\newblock http://arxiv.org/abs/1206.3704.

\bibitem{Colax_Reedy}
Hugo~V. Bacard.
\newblock Colax reedy diagrams.
\newblock 2013.

\bibitem{Bergner_rigid}
Julia~E. Bergner.
\newblock Rigidification of algebras over multi-sorted theories.
\newblock {\em Algebr. Geom. Topol.}, 6:1925--1955, 2006.

\bibitem{Hirsch-model-loc}
Philip~S. Hirschhorn.
\newblock {\em Model categories and their localizations}, volume~99 of {\em
  Mathematical Surveys and Monographs}.
\newblock American Mathematical Society, Providence, RI, 2003.

\bibitem{Hov-model}
Mark Hovey.
\newblock {\em Model categories}, volume~63 of {\em Mathematical Surveys and
  Monographs}.
\newblock American Mathematical Society, Providence, RI, 1999.

\bibitem{Joyal_Kock_wu}
Andr{\'e} Joyal and Joachim Kock.
\newblock Weak units and homotopy 3-types.
\newblock In {\em Categories in algebra, geometry and mathematical physics},
  volume 431 of {\em Contemp. Math.}, pages 257--276. Amer. Math. Soc.,
  Providence, RI, 2007.

\bibitem{Lack_icons}
Stephen Lack.
\newblock Icons.
\newblock {\em Appl. Categ. Structures}, 18(3):289--307, 2010.

\bibitem{Lei2}
T.~{Leinster}.
\newblock {Homotopy Algebras for Operads}.
\newblock http://arxiv.org/abs/math/0002180.

\bibitem{MSS}
Fernando Muro, Stefan Schwede, and Neil Strickland.
\newblock Triangulated categories without models.
\newblock {\em Invent. Math.}, 170(2):231--241, 2007.

\bibitem{Sch-Sh-Algebra-module}
Stefan Schwede and Brooke~E. Shipley.
\newblock Algebras and modules in monoidal model categories.
\newblock {\em Proc. London Math. Soc. (3)}, 80(2):491--511, 2000.

\bibitem{Seg1}
Graeme Segal.
\newblock Categories and cohomology theories.
\newblock {\em Topology}, 13:293--312, 1974.

\bibitem{Simpson_weak_unit}
C.~{Simpson}.
\newblock {Homotopy types of strict 3-groupoids}.
\newblock {\em ArXiv Mathematics e-prints}, October 1998.

\end{thebibliography}
\end{document}